\newcommand\bovermat[2]{%
  \makebox[0pt][l]{$\smash{\overbrace{\phantom{%
    \begin{matrix}#2\end{matrix}}}^{\text{#1}}}$}#2}
\def\e{{\epsilon}}
\def\F{\mathbf F}
\def\e{\mathbf e}
\def\x{\mathbf x}
\def\cC{\mathcal C}
\def\cI{\mathcal I}
\def\cK{\mathcal K}
\def\cM{\mathcal M}
\def\cO{\mathcal O}
\def\cS{\mathcal S}
\def\cV{\mathcal V}
\def\Alt{\mbox{\rm Alt}}
\def\Sym{\mbox{\rm Sym}}
\def\dim{\mbox{\rm dim}}
\def\FF{{\mathbb F}}
\def\f2{{\mathbb F}_{2}}
\newcommand{\AGL}{\mbox{\rm AGL}}
\newcommand{\GL}{\mbox{\rm GL}}
\newcommand{\ga}{\alpha}
\newcommand{\gl}{\lambda}
\newcommand{\gk}{\kappa}
\newcommand{\gr}{\rho}
\newcommand{\gs}{\sigma}
\newcommand{\gt}{\tau}
\newcommand{\ba}{{\bf a}}
\newcommand{\be}{{\bf e}}
\newcommand{\bb}{{\bf b}}
\newcommand{\bv}{{\bf v}}
\newcommand{\bx}{{\bf x}}
\newcommand{\by}{{\bf y}}
\newcommand{\bz}{{\bf z}}
\newcommand{\Span}{{\rm{Span}}}
\def\Fq{\mathbb{F}_{\hspace{-0.7mm}q}}
\def\F2{\mathbb{F}_{\hspace{-0.7mm}2}}
\newtheorem{theorem}{Theorem}[section]
\newtheorem{remark}[theorem]{Remark}
\newtheorem{lemma}[theorem]{Lemma}
\newtheorem{definition}[theorem]{Definition}
\newtheorem{example}[theorem]{Example}
\newtheorem{proposition}[theorem]{Proposition}
\newtheorem{corollary}[theorem]{Corollary}
\newtheorem{algorithm}{Algorithm}[section]
\begin{document}

\begin{frontmatter}

\author[tn]{Carlo Brunetta}
\ead{brunetta@unitn.it}
\author[no]{Marco Calderini \corref{cor1}}
\ead{marco.calderini@uib.no}
\author[tn]{Massimiliano Sala}
\ead{maxsalacodes@gmail.com}

\address[tn]{Department of Mathematics, University of Trento, Via Sommarive 14,
38100 Povo (Trento), Italy}
\address[no]{Department of Informatics, University of Bergen, Thorm{\o}hlensgate 55, 5008 Bergen, Norway}
\cortext[cor1]{Corresponding author}

\title{On Hidden Sums Compatible with A Given Block Cipher Diffusion Layer}

\begin{abstract}
Sometimes it is possible to embed an algebraic trapdoor into a block cipher. Building on previous research, in this paper
we investigate an especially dangerous algebraic structure, which is called a hidden sum and which is related
to some regular subgroups of the affine group. Mixing group theory arguments and cryptographic tools, we pass from
characterizing our hidden sums to designing an efficient algorithm to perform the necessary preprocessing for the
exploitation of the trapdoor.
\end{abstract}

\begin{keyword}
Hidden sums, trapdoor, mixing layer, cryptography, block ciphers.
\end{keyword}

\end{frontmatter}

\section{Introduction}
Sometimes it is possible to embed an algebraic trapdoor into a block cipher. Building on previous research, in this paper
we investigate an algebraic structure, which is called a hidden sum and which is related
to some regular subgroups of the affine group. To be more precise, in \cite{calderini2017elementary}, the authors study some elementary abelian regular subgroups of the affine general linear group acting on a space $V=(\FF_2)^N$, in order to construct a trapdoor for a class of block ciphers.
These subgroups induce alternative operations $\circ$ on $V$, such that $(V,\circ)$ is a vector space over $\FF_2$. In \cite{calderini2017elementary}, it is shown that for a class of these operations, { which we will call \emph{practical hidden sums}}, it is possible to represent the elements with respect to a fixed basis of $(V,\circ)$ in polynomial time. Moreover, an estimate on the number of these operations is given.
Using this class of hidden sums, the authors provide an attack, which works in polynomial time, on ciphers that are vulnerable to this kind of trapdoor.

In this article we continue the analysis started in \cite{calderini2017elementary}. In Section 3 we give a lower bound on the number of practical hidden sums, comparing also this value with a previous upper bound. { From these bounds it is obvious that it is not feasible to generate all possible practical hidden sums due to the large number of these even in small dimensions, e.g. for $N=6$ we have $\sim2^{23}$ practical hidden sums.}
In Section 4 we study the problem of determining the possible maps which are linear with respect to a given practical hidden sums. More precisely, we provide an algorithm that takes as input a given linear map { $\gl$} (with respect to the usual XOR on $V$) and returns { some operations $\circ$, which can be defined over $V$, that are different from the XOR and for which the map $\gl$ is linear, i.e. $\gl(x\circ y)=\gl(x)\circ\gl(y)$ for all $x,\,y$ in $V$.} Our aim is to individuate a family of hidden sums that can weaken the components of a given cipher, or to design a cipher containing the trapdoor based on hidden sums.
In the last section we apply the procedure given in Section 3 to the mixing layer of the cipher PRESENT \cite{CGC-cry-art-Bogdanov2007}, yelding a set of hidden sums which linearize this permutation matrix and might in principle be used to attack the cipher.

\section{Preliminaries and motivations}

We write $\Fq$ to denote the finite field of $q$ elements, where $q$ is a prime power , and $(\Fq)^{s\times t}$ to denote the set of all matrices with entries over $\Fq$ with $s$ rows and $t$ columns. The identity matrix of size $s$ is denoted by $I_s$.
We use 
$$
\e_i=(\underbrace{0,\dots,0}_{i-1},1,\underbrace{0,\dots,0}_{N-i})\in (\Fq)^{N}
$$
to denote the unit vector, which has a $1$ in the $i$th position, and zeros elsewhere. Let $m\ge 1$, the vector (sub)space generated by the vectors $\bv_1,\dots,\bv_m$ is denoted by $\Span\{\bv_1,\dots,\bv_m\}$.

Let $V=(\Fq)^N$, we denote respectively by $\Sym(V)$, $\Alt(V)$ the symmetric and the alternating group acting on $V$.  { We will denote the translation with respect  to a vector $\bv \in V$ by $\gs_\bv:\bx\mapsto \bx+\bv$, and $T(V,+)$ will denote the translations on the vector space $(V,+)$, that is, $T(V,+)=\{\gs_\bv\mid \bv \in V\}$.} By $\AGL(V{ ,+})$ and $\GL(V{ ,+})$ we denote the affine and linear groups of $V$. We write $\langle g_1,\dots,g_m\rangle$ for the group generated by $g_1,\dots,g_m$ in $\Sym(V)$. The map $1_V$ will denote the identity map on $V$.

Let $G$ be a finite group acting  on $V$. We write the action of a permutation $g \in G$ on a vector $\bv \in V$ as $\bv g$. \\
{ We recall that a permutation group $G$ acts \emph{regularly} (or is \emph{regular}) on $V$ if for any pair $\bx,\by\in V$ there exists a unique map $g$ in $G$ such that $\bx g=\by$. Moreover, an {\em elementary abelian group} (or elementary abelian $p$-group) is an abelian group such that any nontrivial element has order $p$. In particular, the group of translations acting on $V$, $T(V,+)$, is a regular elementary abelian group.}

%
%
\subsection{Block ciphers and hidden sums}
Most modern block ciphers are iterated ciphers, i.e. they are obtained by the composition of a finite number $\ell$ of rounds.

%
{ Several classes of iterated block ciphers have been proposed, e.g. \emph{substitution permutation network} \cite{CGC-cry-book-stin95} and \emph{key-alternating block cipher} \cite{CGC-cry-book-daemen2002design}.} Here we present one more recent definition \cite{CGC-cry-art-carantisalaImp} that determines a class large enough to include some common ciphers (AES \cite{daemen2002design}, SERPENT \cite{CGC-cry-art-serpent}, PRESENT \cite{CGC-cry-art-Bogdanov2007}), but with enough algebraic structure to allow for security proofs, { in the contest of symmetric cryptography. In particular, about properties of the groups related to the ciphers}.

Let $V=(\FF_2)^N$ with $N=mb$ and $b\ge 2$. The vector space $V$ is a direct sum
$$
V=V_1\oplus\dots\oplus V_b,
$$
where each $V_i$ has the same dimension $m$ (over $\FF_2$). For any $\bv\in V$, we will write $\bv=\bv_1\oplus\dots \bv_b$, where $\bv_i\in V_i$. 

Any $\gamma\in \Sym(V)$ that acts as $\bv \gamma=\bv_1\gamma_1\oplus\dots\oplus\bv_b\gamma_b$, for some $\gamma_i$'s in $\Sym(V_i)$, is a {\em bricklayer transformation} (a ``parallel map'') and any $\gamma_i$ is a {\em brick}. Traditionally, the maps $\gamma_i$'s are called S-boxes and $\gamma$ a ``parallel S-box''. A linear map $\gl:V\to V$ is traditionally said to be a ``Mixing Layer'' when used in composition with parallel maps. 
For any $I\subset { \{1,\dots,b\}}$, with $I\ne \emptyset, { \{1,\dots,b\}}$, we say that $\bigoplus_{i\in I} V_i$ is a {\em wall}. 
\begin{definition}
A linear map $\gl\in \GL(V{ ,+})$ is a {\em proper mixing layer} if no wall is invariant under $\gl$.
\end{definition}

We can characterize the translation-based class by the following:
\begin{definition}\label{def:tb}
 A block cipher  $\mathcal{C} = \{ \varphi_k \mid k  \in \mathcal{K} \}\subset\Sym(V)$, where $\cK$ is the set containing all the session keys and $\varphi_k$ are keyed permutations, over
  ${\FF_2}$ is called {\em translation based (tb)} if:
       \begin{itemize}
    \item it is the composition of a finite number of $\ell$ rounds, such that any round $\rho_{k,h}$ can be written\footnote{we drop the round indices} as $\gamma\gl\gs_{\bar k}$, where
    \begin{itemize}
    \item[-] $\gamma$ is a round-dependent bricklayer transformation (but it does not depend on $k$),
     \item[-] $\lambda$ is a round-dependent linear map (but it does not depend on $k$),
    \item[-] $\bar{k}$ is in $V$ and depends on both $k$ and the round ($\bar{k}$ is called a ``round key''),
     \end{itemize}
     
      \item for at least one round, which we call \emph{proper}, we have (at the same time) that $\gl$ is proper and that the map $\mathcal{K} \to V$ given by $k \mapsto \bar{k}$ is
      surjective.
    \end{itemize}

\end{definition}
For a tb cipher it is possible to define the following groups.
For each round $h$
$$
\Gamma_h(\cC) = \langle \gr_{k,h} \mid k\in \cK \rangle \subseteq \Sym(V),
$$
and the round function group is given by
$$
\Gamma_\infty(\cC)=\langle \Gamma_h(\cC) \mid h=1,\dots,\ell\rangle.
$$

An interesting problem is determining the properties of the permutation group $\Gamma_{\infty}(\mathcal{C})=\Gamma_{\infty}$  that imply  weaknesses of the cipher. 
A {\em trapdoor} { (sometimes called \textit{backdoor} see \cite{filion})} is a hidden structure of the cipher, whose knowledge allows an attacker to obtain information on the key or to decrypt certain ciphertexts.

The first paper dealing with properties of $\Gamma_{\infty}$ was published by Paterson \cite{CGC-cry-art-paterson1}, who showed  that if this group is imprimitive, then it is possible to embed a trapdoor into the cipher. 
On the other hand, if the group is primitive no such trapdoor can be inserted. 
{Other works, dealing with security properties of ciphers related to groups theory, study also whenever the group generated by the round functions is large \cite{CGC-cry-art-sparr08,CGC-cry-art-Wern2}. However, as shown in \cite{court}, even if a given set of round functions generates a large permutation group, it might be possible to approximate these round function by another set of round functions which generates a small group.}

{Similarly, the primitivity of $\Gamma_{\infty}$ does not guarantee the absence of { other types of} trapdoors {based on the group structure of $\Gamma_{\infty}$}.} For example, if the group
is contained in $\AGL(V)$ {(which is a primitive group)}, the encryption function is affine and  once we know the image of a basis of $V$ and the image of the zero vector, then we are able to reconstruct the matrix and the translation that compose the map. This motivated the authors of \cite{calderini2017elementary} on studying different abelian operations, which can be individuate on $V$ in order to embed a trapdoor into a block cipher. { The authors in \cite{calderini2017elementary} called these operations {hidden sums}.}\\

In the remainder of this section, we summarize the theory presented in \cite[Section 2-3-5]{calderini2017elementary}.
\begin{remark}
If $T\subset \Sym(V)$ is an elementary abelian regular group { acting on $V$}, { then} there exists a vector space structure $(V,\circ)$ such that $T$ is the related translation group. In fact, since $T$ is regular the elements of the group can be labelled $$T=\{\gt_\ba\mid\ba\in V\},$$ where $\gt_\ba$ is the unique map in $T$ such that $0\mapsto \ba$. Then, the sum between two elements is defined by $\bx\circ\ba:=\bx\gt_\ba$. Clearly, $(V,\circ)$ is an abelian additive group and thus a vector space over $\FF_2$.\\
{Conversely, if $(V,\circ)$ is a vector space over $\f2$, then its translation group, given by the maps $\gt_\ba:\bx\mapsto\bx\circ\ba$, is an elementary abelian group acting regularly on $V$.}
\end{remark}


In the following, with the symbol $+$ we refer to the usual sum over the vector space $V$. We denote by $T_+=\mathrm{T}(V,+)$, $\AGL(V,+)$ and $\GL(V,+)$, respectively, the translation, affine and linear groups w.r.t. $+$. We use $T_\circ$, $\AGL(V,\circ)$ and $\GL(V,\circ)$ to denote, respectively, the translation, affine and linear groups corresponding to a hidden sum $\circ$. { That is, $T_\circ=\{\gt_\ba\mid\ba\in V\}$ where $\gt_\ba:\bx\mapsto\bx\circ\ba$, $\GL(V,\circ)$ is the group of the maps $\gl$ such that $(\bx \circ \by)\gl=\bx \gl\circ\by\gl$ for all $\bx,\by\in V$, and any map in $\AGL(V,\circ)$ is the composition of a map $\gl\in\GL(V,\circ)$ and a map  $\gt_\ba\in T_\circ$.\\
Since we will focus on a particular class of these operation $\circ$, we define a \emph{hidden sum} any vector space structure $(V,\circ)$, different from the usual $(V,+)$, such that $T_+\subseteq \AGL(V,\circ)$. While, a {\em practical hidden sum} is a hidden sum such that $T_\circ$ is also contained in $\AGL(V,+)$}.

From \cite{calderini2017elementary} we have three interesting problems: 
\begin{enumerate}
\item Determine the operations $\circ$ (equivalently the translation groups $T_\circ$) such that $T_+\subseteq \AGL(V,\circ)$.
\item Determine the operations $\circ$  such that $T_\circ\subseteq \AGL(V,+)$ (i.e. those which are practical hidden sums).
\item Given a parallel S-box $\gamma$ and a mixing layer $\gl$, determine the operations $\circ$ such that $\gamma,\,\gl\in  \AGL(V,\circ)$ or $\gamma\gl\in  \AGL(V,\circ)$.\label{prob3}
\end{enumerate}
{ Problem 1 is related to identifying the class of hidden sums that could potentially contain $\Gamma_\infty(\cC)$, of a given cipher $\cC$, in the group $\AGL(V,\circ)$ since $T_+\subseteq\Gamma_\infty(\cC)$; or at least to individuating hidden sums for which the XOR with a round key is affine with respect to the operation $\circ$. \\
Operations with the characteristic given in Problem 2 and that of Problem 1 permit to represent an element $\bv$ in $(V,\circ)$ efficiently, as we will see in Algorithm \ref{alg:comb}. \\
The last problem permits to understand if a given block cipher could be modified to introduce the algebraic structure of a hidden sum.}\\

The following vector space plays an important role for studying these problems. Let $T$ be any subgroup of the affine group, we can define the vector space 
$$
U(T)=\{\bv\in V\mid \gs_\bv\in T\}.
$$
In \cite{calderini2017elementary} the authors proved
\begin{proposition}[Proposition 3.6 \cite{calderini2017elementary}]\label{prop:intmax}
Let $V=(\FF_2)^N$, $\dim(V)=N$. Let $T\subseteq \AGL(V,+)$ be an elementary abelian regular subgroup. If $T\ne T_+$, then $1\le \dim(U(T))\le N-2$.
\end{proposition}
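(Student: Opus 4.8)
The plan is to translate the group-theoretic hypotheses on $T$ into the language of a commutative $\FF_2$-algebra whose annihilator is exactly $U(T)$, and then read both inequalities off the algebra structure. First I would use regularity to label $T=\{\gt_\ba\mid \ba\in V\}$, where $\gt_\ba$ is the unique element with $\mathbf{0}\gt_\ba=\ba$. Since $T\subseteq\AGL(V,+)$, each element is affine, so $\bx\gt_\ba=\bx M_\ba+\ba$ for a unique $M_\ba\in\GL(V,+)$ with $M_{\mathbf{0}}=I$; by definition $U(T)=\{\ba\mid M_\ba=I\}$, and closure of $T$ under composition shows directly that this is an $\FF_2$-subspace. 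Writing $N_\ba=M_\ba+I$, the whole argument rests on three identities extracted from the axioms: composing $\gt_\ba\gt_\bb$ and reading off the image of $\mathbf{0}$ gives, via commutativity, the symmetry $\ba N_\bb=\bb N_\ba$; reading off the linear part gives $M_{\ba\circ\bb}=M_\ba M_\bb$, i.e. $N_{\ba\circ\bb}=N_\ba+N_\bb+N_\ba N_\bb$ with $\ba\circ\bb=\ba M_\bb+\bb$; and the order-two condition $\gt_\ba^2=1_V$ gives both $N_\ba^2=0$ and $\ba N_\ba=\mathbf{0}$.

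Next I would package these into an algebra by defining $\ba*\bb:=\ba N_\bb$. Linearity in the first slot is automatic, and the symmetry $\ba N_\bb=\bb N_\ba$ makes $*$ symmetric, hence bilinear over $\FF_2$ and commutative; in particular $\bb\mapsto N_\bb$ is additive. Feeding this additivity into $N_{\ba\circ\bb}=N_\ba+N_\bb+N_\ba N_\bb$ and using $\ba\circ\bb=\ba+\bb+\ba*\bb$ yields $N_{\ba*\bb}=N_\ba N_\bb$, which is precisely associativity of $*$. The identity $\ba N_\ba=\mathbf{0}$ says $\ba*\ba=\mathbf{0}$ for every $\ba$ (the \emph{alternating} property). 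With this dictionary $U(T)=\{\ba\mid N_\ba=0\}=\{\ba\mid \bx*\ba=\mathbf{0}\ \forall\bx\}=\mathrm{Ann}(V,*)$, and the excluded case $T=T_+$ is exactly $*\equiv 0$.

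Both bounds are then short. For the upper bound $\dim U(T)\le N-2$, suppose $\dim\mathrm{Ann}\ge N-1$. If it equals $N$ then $*\equiv 0$ and $T=T_+$, excluded; if it equals $N-1$, choose $\bu$ spanning a complement of $\mathrm{Ann}$, so that every product $\ba*\bb$ reduces to a multiple of $\bu*\bu$, which vanishes by the alternating property, forcing $*\equiv 0$ again, a contradiction. For the lower bound $\dim U(T)\ge 1$, I would note that the $M_\ba$ commute, hence so do the $N_\ba$, which are nilpotent ($N_\ba^2=0$); the associative algebra $A$ they generate is therefore a commutative nil algebra of matrices, so it is nilpotent, $A^m=0$. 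Since $T\ne T_+$ some $N_\ba\ne 0$, so $VA\ne 0$; taking the largest $j\ge 1$ with $VA^j\ne 0$ produces a nonzero subspace $VA^j$ annihilated by every $N_\ba$, hence $\mathbf{0}\ne VA^j\subseteq\mathrm{Ann}=U(T)$.

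The two genuinely delicate points are, first, that $*$ really is bilinear and associative: the symmetry identity $\ba N_\bb=\bb N_\ba$ is what upgrades one-sided linearity to full bilinearity and forces $N$ to be additive, and this is the linchpin that lets all the group relations become algebra relations. Second, the nonvanishing of the annihilator in the lower bound relies on the standard fact that a finite-dimensional commutative nil algebra of matrices is nilpotent (equivalently, that the commuting unipotents $M_\ba$ possess a common nonzero fixed vector); I expect this to be the main obstacle to present cleanly, whereas the upper bound, once the alternating property is in hand, is essentially immediate. Everything else is bookkeeping with the three identities.
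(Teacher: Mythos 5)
You should first be aware that this paper contains no proof of the proposition at all: it is quoted, with citation, from \cite{calderini2017elementary} (Proposition 3.6 there), so the only meaningful comparison is with the machinery of that reference. Your proof is correct, and in substance it reconstructs exactly that machinery: the classical correspondence (going back to Caranti, Dalla Volta and Sala, and exploited throughout \cite{calderini2017elementary}) between abelian regular subgroups of $\AGL(V,+)$ and commutative, associative, nil ring structures $*$ on $V$, under which $U(T)$ becomes the annihilator of the algebra $(V,*)$ and $T=T_+$ corresponds to the zero product. All three of your structural identities are valid ($\ba N_\bb=\bb N_\ba$ from commutativity of $T$, $M_{\ba\circ\bb}=M_\ba M_\bb$ from closure plus regularity, and $N_\ba^2=0$, $\ba N_\ba=0$ from every element having order two), the step $N_{\ba*\bb}=N_\ba N_\bb$ is the right way to obtain associativity, and the upper-bound argument via the alternating property is complete as written.

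On the one ingredient you flag as external in the lower bound --- that a finite-dimensional commutative nil algebra is nilpotent, equivalently Kolchin's common-fixed-vector theorem for the commuting unipotents $M_\ba$ --- the fact is indeed standard, but in your setting it can be made self-contained in two lines, which would remove the only non-elementary citation. Since $\ba\mapsto N_\ba$ is additive and $N_{\ba*\bb}=N_\ba N_\bb$, the algebra $A$ generated by the $N_\ba$ is simply $\{N_\ba\mid \ba\in V\}$; and in $(V,*)$ any product of $N+1$ elements vanishes: expand each factor in a fixed basis of $V$, note that by pigeonhole every resulting monomial has a repeated basis vector, and use commutativity and associativity to bring the two copies together, where $\bx*\bx=0$ kills the term. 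Hence $A^{N+1}=0$ and your descent to the last nonzero $VA^{j}$ goes through. Finally, it is worth emphasizing that your use of the symmetry $\ba N_\bb=\bb N_\ba$ to upgrade ``$\bw N_\ba=0$ for all $\ba$'' to ``$N_\bw=0$'', i.e.\ $\bw\in U(T)$, is the genuine crux of the lower bound: without that identity a common fixed vector of the $M_\ba$ would not be known to lie in $U(T)$, and this is precisely where the hypothesis that $T$ is abelian enters irreplaceably.
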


A characterization is given in \cite{calderini2017elementary} for the maps that generate a translation group $T_\circ\subseteq\AGL(V,+)$ such that $T_+\subseteq \AGL(V,\circ)$.
We recall that for every $\ba$, $\gt_\ba\in T_\circ\subset \AGL(V,+)$ can be written as $\gk_\ba\gs_\ba$ for a linear map $\gk_\ba\in\GL(V,+)$. We will denote by $\Omega(T_\circ)=\{\gk_\ba\mid \ba\in V\}\subset\GL(V,+)$. Moreover $\gk_\ba=1_V$ if and only if $\ba \in U(T_\circ)$.

{ We recall the following definition.
\begin{definition}
An element $r$ of a ring $R$ is called {nilpotent} if $r^m=0$ for some $m\ge 1$ and it is called {unipotent} if $r-1$ is nilpotent, i.e. $(r-1)^m=0$ for some $m\ge 1$. 

Let $G\subseteq \GL(V,+)$ be a subgroup consisting of unipotent permutations, then $G$ is called unipotent.
\end{definition}}
 
\begin{lemma}[Lemma 3.11 \cite{calderini2017elementary}]
Let $T_\circ\subseteq \AGL(V,+)$. We have that $\Omega(T_\circ)$ is unipotent. Moreover, for all $\by\in V$ $\gk_\by^2=1_V$.
\end{lemma}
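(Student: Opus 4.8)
The plan is to exploit the affine form of the maps in $T_\circ$ together with the fact that $(V,\circ)$ has characteristic $2$. First I would fix the decomposition: since $T_\circ\subseteq\AGL(V,+)$, every $\gt_\ba$ is an affine map, written uniquely as $\gt_\ba=\gk_\ba\gs_\ba$ with $\gk_\ba\in\GL(V,+)$, so that (using the right-action convention, in which $\gk_\ba$ is applied before $\gs_\ba$)
$$
\bx\circ\ba=\bx\gt_\ba=\bx\gk_\ba+\ba\qquad\text{for all }\bx\in V.
$$
Because $(V,\circ)$ is a vector space over $\f2$, the group $T_\circ\cong(V,\circ)$ is elementary abelian of exponent $2$; in particular $\ba\circ\ba=\mathbf 0$, whence $\gt_\ba^2=\gt_{\ba\circ\ba}=\gt_{\mathbf 0}=1_V$ for every $\ba\in V$. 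This involutory identity is the engine of the whole argument.

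Next I would read off the linear part of $\gt_\ba^2=1_V$. Squaring the affine map gives $\bx\gt_\ba^2=\bx\gk_\ba^2+\ba\gk_\ba+\ba$; imposing $\bx\gt_\ba^2=\bx$ for all $\bx$ and using the uniqueness of the linear/translation decomposition of an affine map (comparing homogeneous parts) yields $\gk_\ba^2=1_V$. This is precisely the ``moreover'' assertion. The unipotence of each $\gk_\ba$ is then immediate over $\f2$: from $\gk_\ba^2=1_V$ one gets $(\gk_\ba-1_V)^2=\gk_\ba^2+1_V=0$, since the characteristic is $2$ makes the cross term $2\gk_\ba$ vanish. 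Thus each $\gk_\ba$ is unipotent with nilpotency index at most $2$.

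To conclude that $\Omega(T_\circ)$ is a \emph{unipotent group} in the sense of the definition — a subgroup of $\GL(V,+)$ all of whose elements are unipotent — it remains to check closure, which I would extract from the group law of $T_\circ$. Comparing linear parts in $\gt_\ba\gt_\bb=\gt_{\ba\circ\bb}$, the computation $\bx(\gt_\ba\gt_\bb)=\bx\gk_\ba\gk_\bb+\ba\gk_\bb+\bb$ shows $\gk_{\ba\circ\bb}=\gk_\ba\gk_\bb$. Hence $\ba\mapsto\gk_\ba$ is a group homomorphism from $(V,\circ)$ into $\GL(V,+)$ whose image is exactly $\Omega(T_\circ)$; therefore $\Omega(T_\circ)$ is a subgroup, and by the previous paragraph it consists of unipotent elements, i.e. it is unipotent.

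The argument is essentially bookkeeping, so I do not expect a genuine obstacle; the points deserving care are the composition convention (maps act on the right, so $\gt_\ba=\gk_\ba\gs_\ba$ really means $\bx\mapsto\bx\gk_\ba+\ba$) and the uniqueness of the affine decomposition, which is what legitimises ``comparing linear parts'' throughout. The one genuinely structural step is that $\Omega(T_\circ)$ is closed under composition, handled by the homomorphism identity $\gk_{\ba\circ\bb}=\gk_\ba\gk_\bb$; and it is worth emphasising that the characteristic-$2$ identity $(\gk_\ba-1_V)^2=\gk_\ba^2+1_V$ is exactly what lets unipotence follow at once from $\gk_\ba^2=1_V$, with no separate nilpotency estimate.
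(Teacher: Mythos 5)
Your proof is correct: the exponent-two identity $\gt_\ba^2=1_V$, the comparison of linear and constant parts in the affine decomposition to get $\gk_\ba^2=1_V$, the characteristic-$2$ computation $(\gk_\ba-1_V)^2=\gk_\ba^2+1_V=0$, and the homomorphism $\ba\mapsto\gk_\ba$ (giving closure of $\Omega(T_\circ)$) are all sound and complete. Note that the paper itself only states this lemma as a citation of Lemma 3.11 of \cite{calderini2017elementary} without reproducing a proof, and your argument is essentially the standard one given in that source.
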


\begin{theorem}[Theorem 3.17 \cite{calderini2017elementary}]\label{th:forma}
{ Let $N=n+d$ and $V=(\FF_2)^{N}$, with $n\ge 2$ and $d\ge 1$}. Let $T_\circ\subseteq \AGL(V,+)$ be such that $U(T_\circ)=\Span\{\be_{n+1},\dots,\be_{n+d}\}$. Then, $T_+\subseteq \AGL(V,\circ)$ if and only if for all $\gk_\by\in \Omega(T_\circ)$ there exists a matrix $B_\by\in (\FF_2)^{n\times d}$ such that
$$
\gk_\by=\left[\begin{array}{cc}
I_{n} & B_\by\\
0 & I_{d} \end{array}\right].
$$
\end{theorem}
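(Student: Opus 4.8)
The plan is to use the standard fact that $\AGL(V,\circ)$ is precisely the normalizer $N_{\Sym(V)}(T_\circ)$, so that $T_+\subseteq\AGL(V,\circ)$ becomes the statement that every $\gs_\bv$ normalizes $T_\circ$. First I would record the two identities that drive everything. Writing $\gt_\ba=\gk_\ba\gs_\ba$ and composing $\gt_\ba\gt_\bb=\gt_{\ba\circ\bb}$ yields $\gk_{\ba\circ\bb}=\gk_\ba\gk_\bb$ (so $\ba\mapsto\gk_\ba$ is a homomorphism from $(V,\circ)$ onto the elementary abelian group $\Omega(T_\circ)$, with kernel $U(T_\circ)$) together with $\ba\circ\bb=\ba\gk_\bb+\bb$. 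Setting $\nu_\ba:=\gk_\ba-1_V$ and computing the conjugate, using $\gs_\bv^{-1}=\gs_\bv$ over $\FF_2$, I get $\bx(\gs_\bv\gt_\ba\gs_\bv)=\bx\gk_\ba+(\ba+\bv\nu_\ba)$. Since an affine map lies in $T_\circ$ exactly when it equals $\gt_\bc$ with $\bc$ its image of $0$, this conjugate lies in $T_\circ$ if and only if its linear part agrees, i.e. $\gk_{\ba+\bv\nu_\ba}=\gk_\ba$. Hence the whole statement reduces to the single condition
\[
\gk_{\ba+\bv\nu_\ba}=\gk_\ba\quad\text{for all }\ba,\bv\in V,
\]
which I denote $(\star)$.

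Next I would reinterpret the matrix shape relative to $W=\Span\{\be_1,\dots,\be_n\}$ and $U=U(T_\circ)$. The claimed form of $\gk_\by$ is equivalent to two conditions: that $\gk_\by$ fixes $U$ pointwise (this gives the vanishing lower-left block and the lower-right block $I_d$) and that $\Im\nu_\by\subseteq U$ (this gives the top-left block $I_n$, once the lower row is known). The first is \emph{free} from the setup: since $\gk_\bu=1_V$ for $\bu\in U$ and $\circ$ is commutative, $\ba\circ\bu=\ba\gk_\bu+\bu=\ba+\bu$ while also $\ba\circ\bu=\bu\circ\ba=\bu\gk_\ba+\ba$, and comparing forces $\bu\gk_\ba=\bu$. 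The same computation, with the homomorphism property, also records the identity $\gk_{\ba+\bu}=\gk_{\ba\circ\bu}=\gk_\ba\gk_\bu=\gk_\ba$ for every $\bu\in U$, which I reuse below. Thus the theorem is now equivalent to: condition $(\star)$ holds if and only if $\Im\nu_\by\subseteq U$ for every $\by$.

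For the forward direction, which is the crux, I would argue as follows. By the cited Lemma $\gk_\ba^2=1_V$, hence $\nu_\ba^2=0$ in characteristic $2$ and $\Im\nu_\ba\subseteq\Ker\nu_\ba$. Take any $\bw\in\Im\nu_\ba$; then $\bw\in\Ker\nu_\ba$, i.e. $\bw\gk_\ba=\bw$, so $\bw\circ\ba=\bw\gk_\ba+\ba=\bw+\ba=\ba+\bw$. The homomorphism property then gives $\gk_{\ba+\bw}=\gk_{\bw\circ\ba}=\gk_\bw\gk_\ba$, whereas $(\star)$ gives $\gk_{\ba+\bw}=\gk_\ba$; comparing yields $\gk_\bw=1_V$, that is $\bw\in U$. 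Hence $\Im\nu_\ba\subseteq U$, and $\gk_\ba$ has the asserted block form. The reverse direction is then immediate: if $\gk_\by$ has the block form then $\Im\nu_\by\subseteq U$, so $\bv\nu_\ba\in U$ and the identity $\gk_{\ba+\bu}=\gk_\ba$ for $\bu\in U$ delivers $(\star)$, hence $T_+\subseteq\AGL(V,\circ)$.

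The main obstacle is precisely the forward step: condition $(\star)$ is phrased with the ordinary sum $\ba+\bv\nu_\ba$, about which the homomorphism $\ba\mapsto\gk_\ba$ says nothing directly, so the work is to convert it into a multiplicative statement in $\Omega(T_\circ)$. The key observation that unlocks this is that the ordinary sum $\ba+\bw$ coincides with the $\circ$-product $\bw\circ\ba$ exactly when $\bw\in\Ker\nu_\ba$ — a membership guaranteed for $\bw\in\Im\nu_\ba$ by $\nu_\ba^2=0$. Everything else is bookkeeping with $\gk_{\ba\circ\bb}=\gk_\ba\gk_\bb$ and $\ba\circ\bb=\ba\gk_\bb+\bb$; the hypotheses $n\ge 2$ and $d\ge 1$ only guarantee a non-degenerate situation (consistent with $1\le\dim U(T_\circ)\le N-2$) and fix the block sizes.
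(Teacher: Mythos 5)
Your proof is correct, but there is nothing in this paper to compare it against: the statement is imported verbatim from the reference (it is Theorem~3.17 of \cite{calderini2017elementary}), and the present paper quotes it without proof. Judged on its own merits, your argument is a sound, self-contained reconstruction. The reduction of $T_+\subseteq \AGL(V,\circ)$ to the condition $(\star)$ rests on the identification of $\AGL(V,\circ)$ with the normalizer of $T_\circ$ in $\Sym(V)$ (the holomorph of a regular elementary abelian group), which is standard in this literature and is exactly the framework of the cited paper; the two identities $\gk_{\ba\circ\bb}=\gk_\ba\gk_\bb$ and $\ba\circ\bb=\ba\gk_\bb+\bb$ follow correctly from $\gt_\ba\gt_\bb=\gt_{\ba\circ\bb}$ and uniqueness of the linear/translation decomposition of an affine map. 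Your two ``free'' observations check out: commutativity forces $\bu\gk_\ba=\bu$ for $\bu\in U(T_\circ)$, and hence $\gk_{\ba+\bu}=\gk_{\ba\circ\bu}=\gk_\ba$. The crux step is also correct: by the quoted Lemma (Lemma~3.11 of \cite{calderini2017elementary}) one has $\gk_\ba^2=1_V$, so $(\gk_\ba-1_V)^2=0$ in characteristic $2$, whence any $\bw$ in the image of $\gk_\ba-1_V$ satisfies $\bw\gk_\ba=\bw$, giving $\ba+\bw=\bw\circ\ba$ and therefore $\gk_{\ba+\bw}=\gk_\bw\gk_\ba$; combined with $(\star)$ this forces $\gk_\bw=1_V$, i.e.\ $\bw\in U(T_\circ)$, which together with the pointwise fixing of $U(T_\circ)$ is precisely the asserted block form. (Incidentally, you could avoid citing the Lemma altogether: $\gk_\ba^2=\gk_{\ba\circ\ba}=\gk_0=1_V$ follows directly from your homomorphism identity and the fact that $(V,\circ)$ is elementary abelian.) The converse direction as you state it is immediate and correct.
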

Note that we can always suppose that  $U(T_\circ)$ is generated by the last vectors of the canonical basis, as any group $T_\circ$ is conjugated to a group $T_{\circ'}$ such that $U(T_{\circ'})=\Span\{\be_{n+1},\dots,\be_{n+d}\}$ (see \cite[Theorem 3.14]{calderini2017elementary}). \\
{ Indeed, let $T_\circ$ be a translation group of a practical hidden sum, with $U(T_\circ)=\Span\{\bv_1,\dots,\bv_d\}$. We have that $\bv_1,\dots,\bv_d$ are linear independent with respect to the sum $+$, which implies that there exists a linear map $g\in GL(V,+)$ such that $\bv_ig=\be_{n+i}$ for $i=1,\dots,d$. Thus, the conjugated group $T_{\circ'}=gT_\circ g^{-1}$ is such that $U(T_{\circ'})=\Span\{\be_{n+1},\dots,\be_{n+d}\}$ and $\circ'$ is again a practical hidden sum.\\
Practical hidden sums with $U(T_{\circ})=\Span\{\be_{n+1},\dots,\be_{n+d}\}$ are easier to study thanks to the particular structure reported in Theorem \ref{th:forma}.} 

\begin{remark}\label{rm:basis}
When $U(T_\circ)$ is generated by the last vectors of the canonical basis, then the maps $\gt_{\be_i}$ generate $T_\circ$, i.e. the canonical vectors form a basis also for the vector space $(V,\circ)$. { Indeed, thanks to the form of the maps $\gk_{\be_i}$ given in Theorem \ref{th:forma}, it is possible to verify that combining the maps $\gt_{\be_i}$'s we are able to obtain all the $2^N$ different maps contained in $T_\circ$ (see \cite[Lemma 3.7 and Corollary 3.8] {calderini2017elementary} for further details).}
\end{remark}

In \cite{calderini2017elementary} it turns out that for any practical hidden sum $T_\circ\subseteq \AGL(V,+)$, any given vector $\bv$ can be represented in $(V,\circ)$, with respect to the canonical vectors, in polynomial time ($\cO(N^3)$). 
{ Let $T_\circ$ be a practical hidden sum with $U(T_{\circ})=\Span\{\be_{n+1},\dots,\be_{n+d}\}$ ($\dim(V)=N={n+d}$), then the algorithm for determining the representation of a given vector $\bv$ is the following
\begin{algorithm}\label{alg:comb}
\ \\
{\bf Input:} vector $\bv=(v_1,\dots,v_{N})\in V$\\
{\bf Output:} coefficients $\ga_1,\dots,\ga_{N}$ such that $\ga_1\be_1\circ...\circ\ga_{N}\be_{N}=\bv$.\\
$[i]$ $\ga_i\leftarrow v_i$ for $1\le i\le n$;\\
$[ii]$ $\bv'\leftarrow \bv\gt_{\be_1}^{\ga_1}\cdots\gt_{\be_{n}}^{\ga_{n}}$;\\
$[iii]$ $\ga_i\leftarrow v_i'$ for $n+1\le i\le n+d$;\\
return $\ga_1,\dots,\ga_{N}$.
\end{algorithm}
\noindent{{\em Correctness of Algorithm \ref{alg:comb}:}\\
To find the coefficients $\ga_1,\dots,\ga_{N}$ is equivalent to individuating the maps $\gt_{\be_i}$ which generate the map $\gt_\bv$. So, we need to understand which maps (among  the $\gt_{\be_i}$)  are needed to send $0$ in $\bv$, or equivalently $\bv$ in $0$ (since $T_\circ$ is an elementary regular subgroups, $\gt_\bv$ is the unique map such that $0\gt_\bv=\bv$ and  $\bv\gt_\bv=0$).

Note that thanks to the form of the maps $\gk_{\be_i}$ given in Theorem \ref{th:forma}, whenever we apply a map $\gk_{\be_i}$ to a vector $\bx$, the first $n$ entries of $\bx$ are left unchanged. So, if the entry $v_i$ is equal to $1$ for $1\le i\le n$, to delete it we need to apply $\gt_{\be_i}$. This explains step [ii]. 

Now, $\bv'=\bv\gt_{\be_1}^{\ga_1}\cdots\gt_{\be_{n}}^{\ga_{n}}$ is such that the first $n$ entries are all zero. So, we need to delete the entry $v'_j$ for $n+1\le j\le N$, whenever $v'_j=1$. Since $\gt_{\be_j}=\gs_{\be_j}$ for $n+1\le j\le N$, we apply $\gt_{\be_j}$ if $v'_j=1$.

So, we have obtained a map $\gt\in T_\circ$ such that $\bv\gt=0$, and since this is unique we have $\gt=\gt_\bv$.}\\

Note that if $U(T_{\circ})$ is not generated by the last vectors of the canonical basis, then we can consider the conjugated group $T_{\circ'}=gT_\circ g^{-1}$, with $g$ as above, and we need to apply Algorithm \ref{alg:comb} to $\bv g$, obtaining its representation in $(V,\circ')$. This leads us to obtain the representation of $\bv$ in $(V,\circ)$ with respect to the basis $\{\be_1 g^{-1},...,\be_N g^{-1}\}$ of $(V,\circ)$. In particular, the map $g$ is an isomorphism of vector space between $(V,\circ)$ and $(V,\circ')$.}

From this, the authors proved that a hidden sum trapdoor is practical, whenever $\Gamma_\infty\subseteq\AGL(V,\circ)$ and $T_\circ\subseteq \AGL(V,+)$.\\
{A class of these practical hidden sums is used in \cite{Rob} to weaken the non-linearity of some APN S-boxes. In addition, in  \cite{Rob} a differential attack with respect to hidden sums is presented.}

An upper bound on the number of some hidden sums is given in \cite{calderini2017elementary} and reported below.\\
Let $n\ge 2$ and $d\ge 1$. { According to Theorem \ref{th:forma}, denote the matrix} $\gk_{\be_i}$ by
\begin{equation}\label{eq:gammai}
\gk_{\be_i}=\left[\begin{array}{cccc}
 & b^{(i)}_{1,1}&\dots&b^{(i)}_{1,d}\\
 I_{n}  & \vdots& &\vdots\\

 & b^{(i)}_{n,1}&\dots&b^{(i)}_{n,d}\\
0 & &I_{d}& \end{array}\right],
\end{equation}
for each $1\le i\le n$.
\begin{theorem}[Theorem 5.5 \cite{calderini2017elementary}]\label{lm:numero}
Let $N=n+d$ and $V=(\FF_2)^{N}$, with $n\ge 2$ and $d\ge 1$. The number of elementary abelian regular subgroups $T_\circ\subseteq \AGL(V,+)$ such that $\dim (U(T_\circ))=d$ and $T_+\subseteq \AGL(V,\circ)$ is
\begin{equation}\label{eq:numero}
{N\brack d}_2\cdot |\cV(\cI_{n,d})|
\end{equation}
where  $\cV(\cI_{n,d})$ is the variety of the ideal $\cI_{n,d}\subset\FF_2\left[b^{(s)}_{i,j}\mid {1\le i,s\le n,\, 1\le j\le d}\right]$ generated by 
$$
\cS_0\cup\cS_1\cup\cS_2\cup\cS_3
$$
with
$$
\begin{aligned}
\cS_0&=\left\{\left(b^{(s)}_{i,j}\right)^2-b^{(s)}_{i,j}\mid {1\le i,s\le n,\, 1\le j\le d}\right\}\\
\cS_1&=\left\{\prod_{i=1}^{n}\prod_{j=1}^{d}\left(1+\sum_{s\in S}b^{(s)}_{i,j}\right)\mid S\subseteq \{1,\dots,n\},S\ne \emptyset\right\},\\
\cS_2&=\left\{b^{(s)}_{i,j}-b^{(i)}_{s,j}\mid {1\le i,s\le n,\, 1\le j\le d}\right\},\\
\cS_3&=\left\{b^{(i)}_{i,j}\mid {1\le i\le n,\, 1\le j\le d}\right\},
\end{aligned}
$$
and ${N\brack d}_q=\prod_{i=0}^{d-1}\frac{q^{N-i}-1}{q^{d-i}-1}$ is the Gaussian Binomial.
\end{theorem}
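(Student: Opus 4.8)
The plan is to prove the formula as a product of two independent counts: the number of admissible subspaces $U(T_\circ)$, and, for one fixed such subspace, the number of groups $T_\circ$ realizing it. First I would peel off the factor ${N\brack d}_2$. Every group under consideration determines the $d$-dimensional subspace $U(T_\circ)\subseteq V$, and the number of $d$-dimensional subspaces of $(\FF_2)^N$ is exactly the Gaussian binomial ${N\brack d}_2$. So it suffices to show that the number of admissible $T_\circ$ with a prescribed $U(T_\circ)=W$ is independent of the $d$-dimensional subspace $W$, and equals $|\cV(\cI_{n,d})|$ when $W=\Span\{\be_{n+1},\dots,\be_{n+d}\}$. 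Independence of $W$ follows by conjugation: if $g\in\GL(V,+)$ sends $W$ to the standard subspace $W_0$, then $T_\circ\mapsto g^{-1}T_\circ g$ is a bijection from the admissible groups with $U=W$ onto those with $U=W_0$, because conjugation by a linear map fixes $T_+$ setwise (hence preserves the condition $T_+\subseteq\AGL(V,\circ)$), preserves the elementary abelian regular property, and carries $U(T_\circ)$ to $U(T_\circ)g$ (since $g^{-1}\gs_\bv g=\gs_{\bv g}$). As $\GL(V,+)$ is transitive on $d$-dimensional subspaces, this gives the reduction; see also \cite[Theorem 3.14]{calderini2017elementary}.

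Next, for $W_0=\Span\{\be_{n+1},\dots,\be_{n+d}\}$ I would set up the parametrization. By Remark~\ref{rm:basis} the maps $\gt_{\be_1},\dots,\gt_{\be_N}$ generate $T_\circ$; for $j>n$ one has $\be_j\in U(T_\circ)$, so $\gt_{\be_j}=\gs_{\be_j}$, and thus $T_\circ$ is determined by $\gk_{\be_1},\dots,\gk_{\be_n}$. By Theorem~\ref{th:forma}, admissibility forces each $\gk_{\be_i}$ into the block-triangular shape \eqref{eq:gammai}, encoded by the entries $b^{(i)}_{r,j}$. Regularity requires no real work: since each $\gk_{\be_i}$ fixes the first $n$ coordinates and the remaining generators are translations, one checks directly that the images of $\mathbf 0$ under the $2^N$ products of the $\gt_{\be_i}$ are pairwise distinct (the first $n$ coordinates recover the chosen index subset of $\{1,\dots,n\}$, and the last $d$ then recover the rest), so the stabilizer of $\mathbf 0$ is trivial and $T_\circ$ is automatically regular. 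Hence $T_\circ\mapsto(b^{(i)}_{r,j})$ is a bijection from the admissible groups with $U=W_0$ onto the tuples in $\FF_2^{\,n^2d}$ satisfying the remaining structural constraints, and $\cS_0$ is precisely the set of field equations cutting out $\FF_2$-points; so the target count is $|\cV(\cI_{n,d})|$.

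It then remains to translate the group-theoretic axioms into $\cS_2,\cS_3$. Imposing $\gt_{\be_i}^2=1_V$: since $\gk_{\be_i}^2=1_V$ (the lemma recalled just before Theorem~\ref{th:forma}), one gets $\gt_{\be_i}^2=\gs_{\be_i\gk_{\be_i}+\be_i}$, which is trivial iff $\be_i\gk_{\be_i}=\be_i$, i.e.\ the $i$-th row of the block $B_{\be_i}$ vanishes, giving $\cS_3=\{b^{(i)}_{i,j}\}$. Imposing $\gt_{\be_i}\gt_{\be_s}=\gt_{\be_s}\gt_{\be_i}$: the linear parts commute automatically (they are upper unitriangular and their off-diagonal blocks simply add), and equating the translation parts yields $b^{(s)}_{i,j}=b^{(i)}_{s,j}$, i.e.\ $\cS_2$. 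Together with the order-two condition, commutativity of the generators makes the whole group elementary abelian, so no further relations are needed.

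The main obstacle, and the step I would treat most carefully, is $\cS_1$, which forces $\dim U(T_\circ)=d$ rather than something larger. For $S\subseteq\{1,\dots,n\}$ the product $\prod_{s\in S}\gt_{\be_s}$ has linear part whose off-diagonal block is $\sum_{s\in S}B_{\be_s}$, so it is a pure translation exactly when $\sum_{s\in S}b^{(s)}_{i,j}=0$ for all $i,j$; and when $S\neq\emptyset$ this translation moves $\mathbf 0$ by a vector whose first $n$ coordinates equal $\sum_{s\in S}\be_s\neq 0$, hence lying in $U(T_\circ)\sms\Span\{\be_{n+1},\dots,\be_{n+d}\}$. Thus $\dim U(T_\circ)=d$ holds iff no nonempty $S$ makes all those sums vanish. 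Over $\FF_2$ the polynomial $\prod_{i=1}^{n}\prod_{j=1}^{d}\bigl(1+\sum_{s\in S}b^{(s)}_{i,j}\bigr)$ equals $1$ precisely in the forbidden case (every inner sum zero) and $0$ otherwise, so requiring its vanishing for each nonempty $S$ is exactly the needed condition; this is $\cS_1$. Collecting $\cS_0\cup\cS_1\cup\cS_2\cup\cS_3$, the admissible tuples are the $\FF_2$-points of $\cI_{n,d}$, i.e.\ $\cV(\cI_{n,d})$, and multiplying by the subspace count ${N\brack d}_2$ yields \eqref{eq:numero}.
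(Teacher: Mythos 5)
Your proposal is correct and follows essentially the same route as the paper: the factor ${N\brack d}_2$ arises from counting the possible subspaces $U(T_\circ)$ together with the conjugation reduction to $U(T_\circ)=\Span\{\be_{n+1},\dots,\be_{n+d}\}$ (the paper's remark after Theorem \ref{th:forma}), and the translation of the group axioms --- full rank of the $B_{\be_i}$'s, abelianity, and elementarity --- into $\cS_1$, $\cS_2$, $\cS_3$ is exactly the paper's conditions i)--iii) in Section 3, with $\cS_0$ as the field equations. Your explicit verification of regularity and the $\FF_2$-evaluation argument for $\cS_1$ fill in details the paper delegates to \cite{calderini2017elementary}, but the decomposition and key steps coincide.
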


\begin{proposition}[Proposition 5.6 \cite{calderini2017elementary}]\label{prop:upperbound}
Let $\cI_{n,d}$ defined as in Theorem \ref{lm:numero}, then
$$
 |\cV(\cI_{n,d})|\le 2^{d\frac{n(n-1)}{2}}-1-\sum_{r=1}^{n-2}{n\choose r}\prod_{1}^{\binom{n-r}{2}}\left(2^{d}-1\right)=\mu(n,d).
$$
\end{proposition}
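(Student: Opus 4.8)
The plan is to count, inside the set $X$ of points satisfying $\cS_0\cup\cS_2\cup\cS_3$, an explicit family of points that violate $\cS_1$; subtracting the size of this family from $|X|$ then yields the stated upper bound on $|\cV(\cI_{n,d})|$. First I would describe $X$ concretely. The equations $\cS_0$ force every $b^{(s)}_{i,j}$ to lie in $\FF_2$, while $\cS_2$ (the symmetry $b^{(s)}_{i,j}=b^{(i)}_{s,j}$) and $\cS_3$ (the vanishing diagonal $b^{(i)}_{i,j}=0$) say precisely that, for each fixed $j$, the array $(b^{(s)}_{i,j})_{s,i}$ is a symmetric $n\times n$ matrix with zero diagonal. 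Equivalently, a point of $X$ is an assignment of a vector $\beta_{\{s,i\}}=(b^{(s)}_{i,1},\dots,b^{(s)}_{i,d})\in\FF_2^d$ to each unordered pair $\{s,i\}$ with $s\ne i$, i.e. an $\FF_2^d$-labelling of the edges of $K_n$. Hence $|X|=(2^d)^{\binom{n}{2}}=2^{d\frac{n(n-1)}{2}}$, which is the leading term of $\mu(n,d)$.

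Next I would record what it means to violate $\cS_1$. A point of $X$ lies in $\cV(\cI_{n,d})$ exactly when every generator of $\cS_1$ vanishes, i.e. for every nonempty $S\subseteq\{1,\dots,n\}$ there is a pair $(i,j)$ with $\sum_{s\in S}b^{(s)}_{i,j}=1$. Thus a point of $X$ is \emph{not} in the variety precisely when there exists a nonempty $S$ with $\sum_{s\in S}b^{(s)}_{i,j}=0$ for all $i,j$; through the edge-labelling this reads $\sum_{s\in S\setminus\{i\}}\beta_{\{s,i\}}=0$ for every vertex $i$. The goal is then to exhibit many such ``bad'' labellings. For each $r$ with $1\le r\le n-2$ and each subset $R\subseteq\{1,\dots,n\}$ with $|R|=r$, I would take the labellings supported on the complete graph of the complement: set $\beta_e=0$ unless both endpoints of $e$ lie in $R^c:=\{1,\dots,n\}\setminus R$, and assign an arbitrary \emph{nonzero} vector of $\FF_2^d$ to each of the $\binom{n-r}{2}$ edges inside $R^c$. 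A short check with the witness $S=R$ shows such a point is bad: for $i\in R$ the relevant edges lie inside $R$, and for $i\in R^c$ they join $R^c$ to $R$, so in both cases every edge met carries the zero label and the sum over $s\in R\setminus\{i\}$ vanishes. This yields $\binom{n}{r}(2^d-1)^{\binom{n-r}{2}}$ bad points for each $r$, and the all-zero labelling is one further bad point (with any $S$ as witness).

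Finally I would argue that these bad points are pairwise distinct, which is where the care is needed: for a point coming from $(R,\text{labelling})$ the set of edges carrying a nonzero label is exactly $E(K_{R^c})$, and since $|R^c|=n-r\ge2$ every vertex of $R^c$ is incident to such an edge while no vertex of $R$ is; hence $R^c$, and therefore $R$ and $r$, are recovered from the support. Consequently distinct $(r,R)$ give distinct points, different labellings on a fixed $R$ obviously differ, and none coincides with the empty-support all-zero point. Summing, the number of bad points is at least $1+\sum_{r=1}^{n-2}\binom{n}{r}(2^d-1)^{\binom{n-r}{2}}$, whence $|\cV(\cI_{n,d})|=|X|-\#\{\text{bad}\}\le\mu(n,d)$.

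The main obstacle is exactly this distinctness bookkeeping — ensuring the families for different $(r,R)$ do not overlap and stay disjoint from the all-zero point — together with the verification that $S=R$ genuinely witnesses failure of $\cS_1$; once the edge-labelling reformulation is in place, the remaining counting is routine. Note also that we only claim a lower bound on the number of bad points (our family is a subset of all $\cS_1$-violating points), which is all that is required for the upper bound on $|\cV(\cI_{n,d})|$.
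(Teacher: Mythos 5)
Your proof is correct. Note that this paper does not actually prove the proposition --- it is imported verbatim from \cite{calderini2017elementary} (Proposition 5.6) --- but your argument is precisely the counting that underlies the stated formula and matches the paper's own reformulation of $\cV(\cI_{n,d})$ as the set $\cM_{n,d}$ of symmetric, zero-diagonal, $\FF_2$-full-rank matrices over $\FF_{2^d}$: the leading term $2^{d\binom{n}{2}}$ counts all symmetric zero-diagonal edge-labellings, the $-1$ removes the zero labelling, and each term $\binom{n}{r}(2^d-1)^{\binom{n-r}{2}}$ removes the labellings whose support is exactly the complete graph on an $(n-r)$-subset, your support argument correctly guaranteeing that these families are pairwise disjoint so that no inclusion-exclusion correction is needed.
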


\section{New lower bounds and asymptotic estimates}

In this section we will provide a lower bound on the cardinality of the variety $\cV(\cI_{n,d})$. Moreover we will show that the ratio between the upper bound and lower bound is less than $e^{\frac{2^d+1}{2^d(2^d-1)}}$.\\

From Theorem \ref{th:forma} and from Remark \ref{rm:basis}, we have that a group $T_\circ$ with $U(T_\circ)=\Span\{\be_{n+1},\dots,\be_{n+d}\}$ is determined by the maps $\gk_{\be_i}$'s, and in particular by $B_{\be_1},\dots,B_{\be_n}$ ($B_{\be_i}=0$ for $n+1\le i\le n+d$). 
{ Thus}, we need to construct the matrices $B_{\be_1},\dots,B_{\be_n}$ so that:


\begin{itemize}

\item[i)] \label{condizionei}$\dim(U(T_\circ))=d$. Then for any $\bv \in \Span\{\be_1,\dots, \be_n\}$, $B_\bv \neq 0$. Indeed, $\bv \in U(T_\circ)$ if and only if $B_\bv =0$. Moreover, 
		for any $\bv=(v_1,\dots,v_{n+d})\in V$ we have 
		$$
		\gk_\bv = \left[\begin{array}{cc}
		 I_{n} & \sum_{i=1}^n {v_i} B_{\be_i} \\ 0 & I_{d} \end{array}\right]
		 $$

		and so we get that $B_\bv = \sum_{i=1}^n v_i B_{\be_i}$ (see Lemma 5.4 in \cite{calderini2017elementary}).\\
		This implies that every non-null linear combination of $B_{\be_1},\dots,B_{\be_n}$ is non-zero.  Note that, in Theorem \ref{lm:numero}, this condition is given by $\cS_1$.

\item[ii)] $T_\circ$ is abelian. This happens if and only if the  $i$th row of $B_{\be_j}$ is equal to the $j$th row of $B_{\be_i}$, obtaining the set $\cS_2$.

\item[iii)] $T_\circ$ is elementary, that is, $\gt_\bv^2=1_V$ for all $\bv$. Then $\gk_\bv$ fixes $\bv$ and, in particular, $\gk_{\be_i}$ fixes $\be_i$. This is equivalent to having the $i$th row  of  $B_{\be_i}$ equals to $0$, which is expressed by $\cS_3$. 

\end{itemize}

Consider now any matrix $B_{\be_i}$. Its size is $n\times d$, thus any row of $B_{\be_i}$ can be written as an element of $\FF_{2^d}$. Let us define the matrix $\mathfrak{B}_\circ=[B_{\be_1}\dots B_{\be_n}]\in (\FF_{2^d})^{n\times n}$. From Condition i), ii) and iii), above, we have the following properties for  $\mathfrak{B}_\circ$:
\begin{itemize}
\item[(I)] $\mathfrak{B}_\circ$ is full rank over $\FF_2$, { when seen as an element of $(\FF_2)^{n\times nd}$}. This guarantees that $\dim (U(T_\circ)) = d$.
\item[(II)] $\mathfrak{B}_\circ$ is symmetric.  Since the $i$th row of $B_{\be_j}$ is equal to the $j$th row of $B_{\be_i}$ we have that the entry $(i,j)$ of $\mathfrak{B}_\circ$ is equal to the entry $(j,i)$.
\item[(III)] $\mathfrak{B}_\circ$ is zero-diagonal, as the $i$th row of $B_{\be_i}$ is zero for all $i$.
\end{itemize}

\begin{example}
Let $N = 5$ and $d=2$ and consider the operation $\circ$ defined by the following $B_{\be_i}$'s:

\[
B_{\be_1 } = \begin{bmatrix}   0 & 0  \\  1 & 1 \\  1 & 1  \end{bmatrix} , B_{\be_2 } = \begin{bmatrix} 1 & 1 \\  0 & 0 \\ 0 & 1 \end{bmatrix}
B_{\be_3} = \begin{bmatrix}   1 & 1  \\  0 & 1 \\ 0 & 0   \end{bmatrix} ,
 B_{\be_4} = \begin{bmatrix}   0 & 0  \\  0 & 0 \\  0 & 0 \end{bmatrix}  , B_{\be_5} = \begin{bmatrix}   0 & 0  \\  0 & 0 \\  0 & 0 \end{bmatrix}
\]
Since we have $n = N-d = 3$, we just need to focus on $B_{\be_1}$, $B_{\be_2}$ and $B_{\be_3}$.\\[2mm]
Representing the rows of the matrices as elements of $\mathbb{F}_{2^2} = \{0,1,\alpha,\alpha+1\}$, where $\alpha^2 = \alpha + 1$, we can rewrite the matrices as 

\[
B_{\be_1 } = \begin{bmatrix}   0  \\  \alpha + 1  \\ \alpha + 1  \end{bmatrix}, B_{\be_2 } = \begin{bmatrix}  \alpha + 1   \\  0 \\ \alpha \end{bmatrix},
B_{\be_3} = \begin{bmatrix}   \alpha + 1  \\  \alpha \\ 0  \end{bmatrix} 
\]
and so we can rewrite $\mathfrak{B}_{\circ}$ as
\[\mathfrak{B}_{\circ} = \begin{bmatrix} B_{\be_1} & B_{\be_2} & B_{\be_3} \end{bmatrix} = \begin{bmatrix} 0 & \alpha +1 & \alpha +1 \\ \alpha +1  & 0 & \alpha \\ \alpha+1 & \alpha & 0  \end{bmatrix}\]

\end{example}

In Theorem \ref{lm:numero} we have a one-to-one correspondence between the matrices $B_{\be_i}$'s and the points of $\cV(\cI_{n,d})$. Thanks to the $\mathfrak{B}_\circ$ construction, we have a one-to-one correspondence between the matrices $\mathfrak{B}_\circ\in (\FF_{2^d})^{n\times n}$ having the above characteristics (I), (II), (III) and the groups $T_\circ$ with $U(T_\circ)$ generated by
$\be_{n+1},\dots,\be_{n+d}$. 
That is, defining $$\cM_{n,d}=\{\mathfrak{B}_\circ\in (\FF_{2^d})^{n \times n}\mid \mathfrak{B}_\circ \text{ satisfies conditions (I), (II), and (III)}\}$$
we have $|\cM_{n,d}|=|\cV(\cI_{n,d})|$. Thus, we aim to estimate $|\cM_{n,d}|$.\\

We recall the following result given in \cite{macwilliams69ort}.
\begin{proposition}\label{prop:sym}
Let $q$ be a power of $2$. The number of $n \times n$ symmetric invertible matrices over $\FF_q$ with zero diagonal is
$$
\begin{cases} q^{\binom{n}{2}} { \prod_{j=1}^{\lceil{\frac{n-1}{2}}\rceil}} \left( 1 - q^{1 - 2j}\right)  &   \text{if $n$ is even}\\ 
0 &  \text{if $n$ is odd}\\ \end{cases}.
$$
\end{proposition}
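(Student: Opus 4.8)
The plan is to reinterpret the matrices geometrically as bilinear forms and then to count by a single orbit computation. Since $q$ is a power of $2$ we are in characteristic $2$, so $-1=1$; hence a matrix $M\in(\FF_q)^{n\times n}$ is symmetric with zero diagonal if and only if the bilinear form $B_M(\bx,\by)=\bx M\by^\top$ is \emph{alternating}, i.e. $B_M(\bv,\bv)=0$ for every $\bv$. Indeed, if $M$ is symmetric with zero diagonal then $B_M(\bv,\bv)=\sum_i m_{ii}v_i^2+\sum_{i<j}(m_{ij}+m_{ji})v_iv_j=0$; conversely $B_M(\be_i,\be_i)=0$ forces the diagonal to vanish and $B_M(\be_i+\be_j,\be_i+\be_j)=0$ forces $m_{ij}=m_{ji}$. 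Moreover $M$ is invertible exactly when $B_M$ is non-degenerate. Thus the number to be computed is the number of non-degenerate alternating forms on $(\FF_q)^n$. I stress that this reduction is precisely where the hypothesis ``$q$ even'' enters: in odd characteristic, symmetric-with-zero-diagonal is strictly weaker than alternating.

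First I would dispose of the odd case. The rank of any alternating matrix is even: the standard symplectic reduction brings it, by congruence, to a block-diagonal matrix with hyperbolic blocks $\left[\begin{smallmatrix}0&1\\1&0\end{smallmatrix}\right]$ and a zero block, so the rank is twice the number of hyperbolic blocks. Hence no invertible alternating matrix exists when $n$ is odd, and the count is $0$.

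For $n=2m$ even, the strategy is orbit--stabiliser. The group $\GL(V,+)=\GL_n(\FF_q)$ acts on the set $\cA_n$ of non-degenerate alternating Gram matrices by congruence, $M\mapsto P^\top M P$, which is just a change of basis of the underlying form. The classification of symplectic spaces --- every non-degenerate alternating form of a fixed even dimension over $\FF_q$ is equivalent to the standard form $J=\bigoplus_{i=1}^{m}\left[\begin{smallmatrix}0&1\\1&0\end{smallmatrix}\right]$ --- says exactly that this action is transitive, and the stabiliser of $J$ is by definition the symplectic group $\mathrm{Sp}_n(\FF_q)$. Orbit--stabiliser then yields $|\cA_n|=|\GL_n(\FF_q)|/|\mathrm{Sp}_n(\FF_q)|$.

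It then remains to substitute the standard group orders $|\GL_n(\FF_q)|=q^{\binom n2}\prod_{i=1}^{n}(q^i-1)$ and $|\mathrm{Sp}_{2m}(\FF_q)|=q^{m^2}\prod_{i=1}^{m}(q^{2i}-1)$ and to simplify: the even-index factors $q^{2i}-1$ in the numerator cancel the denominator, leaving $q^{m(m-1)}\prod_{j=1}^{m}(q^{2j-1}-1)$, and pulling a factor $q^{1-2j}$ out of each term $q^{2j-1}-1$ collapses the exponents to $\binom{2m}{2}$ (using $\lceil(2m-1)/2\rceil=m$), reproducing $q^{\binom n2}\prod_{j=1}^{\lceil(n-1)/2\rceil}(1-q^{1-2j})$. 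The main obstacle is the transitivity claim, i.e. the existence of a symplectic basis for an arbitrary non-degenerate alternating form in characteristic $2$: this is the genuine linear-algebra content that lets us replace an a priori enumeration by a single index computation, while the remaining steps are a routine cancellation and an arithmetic identity.
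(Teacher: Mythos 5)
Your proof is correct, but it is worth noting that the paper does not actually prove this statement at all: it is quoted verbatim from MacWilliams (\emph{Orthogonal matrices over finite fields}, Amer.\ Math.\ Monthly, 1969), whose own derivation proceeds by elementary recursive counting of symmetric matrices of given rank. Your route is therefore genuinely different and self-contained: the identification (valid precisely because $q$ is even) of symmetric zero-diagonal matrices with Gram matrices of alternating forms, the even-rank property disposing of odd $n$, and the orbit--stabiliser computation $|\cA_n|=|\GL_n(\FF_q)|/|\mathrm{Sp}_n(\FF_q)|$ resting on the existence of a symplectic basis. All the steps check out: the congruence action does preserve the set of non-degenerate alternating matrices, transitivity is exactly the classification of symplectic spaces (which holds in characteristic $2$), and the arithmetic simplification $q^{\binom{2m}{2}}\prod_{i=1}^{2m}(q^i-1)\big/\bigl(q^{m^2}\prod_{i=1}^{m}(q^{2i}-1)\bigr)=q^{\binom{2m}{2}}\prod_{j=1}^{m}(1-q^{1-2j})$ is right, including the bookkeeping $\lceil (2m-1)/2\rceil=m$. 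What the paper's citation buys is brevity; what your argument buys is transparency --- in particular it explains structurally \emph{why} the count vanishes for odd $n$ and where the hypothesis on $q$ enters, both of which are invisible when the formula is taken as a black box.
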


\begin{proposition}\label{lowerbound}
Let $N = n + d$ and $V = (\mathbb{F}_2)^N$, with $n \geq 2$ and $d \geq 1$. Let $q = 2^d$. Then we can define a lower bound $\nu (n,d) \leq |\cM_{n,d}|$ by

$$
\nu(n,d) = \begin{cases} q^{\binom{n}{2}} { \prod_{j=1}^{\lceil{\frac{n-1}{2}}\rceil}} \left( 1 - q^{1 - 2j}\right)  &   \text{if $n$ is  even}\\ 
(q^{n-1}-2^{n-1}) q^{\binom{n-1}{2}} { \prod_{j=1}^{\lceil{\frac{n-2}{2}}\rceil}}  \left( 1 - q^{1 - 2j}\right)  &  \text{if $n$ is  odd}\\ \end{cases}
$$
\end{proposition}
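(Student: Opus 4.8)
My starting point is the observation that condition (I) only demands full rank over $\FF_2$, whereas Proposition \ref{prop:sym} counts the stronger property of invertibility over the extension field $\FF_q=\FF_{2^d}$. Since $\FF_2\subseteq\FF_q$, any $\mathfrak{B}\in(\FF_q)^{n\times n}$ that is invertible over $\FF_q$ has $\FF_q$-independent, hence $\FF_2$-independent, rows, so it automatically satisfies (I); being symmetric and zero-diagonal it then lies in $\cM_{n,d}$. Thus the set of symmetric zero-diagonal matrices invertible over $\FF_q$ is a subset of $\cM_{n,d}$, and counting it already gives a lower bound. For $n$ even this is the whole story, since Proposition \ref{prop:sym} immediately yields
$$
|\cM_{n,d}|\ \ge\ q^{\binom{n}{2}}\prod_{j=1}^{\lceil (n-1)/2\rceil}\bigl(1-q^{1-2j}\bigr)=\nu(n,d).
$$

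The real work is the odd case, where Proposition \ref{prop:sym} returns $0$: a symmetric zero-diagonal matrix in characteristic $2$ is alternating, and an alternating matrix of odd size is always singular over $\FF_q$. So for odd $n$ I cannot rely on $\FF_q$-invertibility and must instead exhibit enough matrices that are full rank over $\FF_2$ yet singular over $\FF_q$. The plan is a block construction: I would consider
$$
\mathfrak{B}=\begin{bmatrix} A & \bc \\ \bc^T & 0 \end{bmatrix},
$$
where $A$ runs over the $(n-1)\times(n-1)$ symmetric zero-diagonal matrices invertible over $\FF_q$ — of which there are $q^{\binom{n-1}{2}}\prod_{j=1}^{\lceil(n-2)/2\rceil}(1-q^{1-2j})$ by Proposition \ref{prop:sym}, as $n-1$ is even — and $\bc\in(\FF_q)^{n-1}$ is a free column. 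Every such $\mathfrak{B}$ is symmetric and zero-diagonal, and distinct pairs $(A,\bc)$ give distinct matrices, so it remains to count, for each fixed invertible $A$, the columns $\bc$ making $\mathfrak{B}$ full rank over $\FF_2$.

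Here is how I would carry out that count, which I expect to be the main obstacle. By symmetry, $\mathfrak{B}$ fails to have full rank over $\FF_2$ exactly when $A\bx+t\bc=0$ and $\bc^T\bx=0$ for some nonzero $(\bx,t)\in(\FF_2)^{n-1}\times\FF_2$ (the two blocks of $\mathfrak{B}\bz=0$ with $\bz=(\bx,t)$). If $t=0$ then $A\bx=0$ forces $\bx=0$, since $A$ is invertible over $\FF_q\supseteq\FF_2$; hence any dependency has $t=1$ and $\bc=A\bx$. The crucial point is that for such a $\bc$ the second equation $\bc^T\bx=\bx^T A\bx$ vanishes automatically, because $A$ is alternating ($\bx^T A\bx=\sum_{i<j}A_{ij}(x_ix_j+x_jx_i)=0$ in characteristic $2$). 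Consequently the ``bad'' columns are precisely $\{A\bx\mid\bx\in(\FF_2)^{n-1}\}$, a set of size $2^{n-1}$ since $\bx\mapsto A\bx$ is injective. This leaves $q^{n-1}-2^{n-1}$ admissible columns for each $A$, and multiplying by the number of choices of $A$ gives $|\cM_{n,d}|\ge\nu(n,d)$ for odd $n$. Once the alternating identity $\bx^T A\bx=0$ is in hand, everything else is routine bookkeeping; the only subtlety to keep in mind is that, because we restrict to matrices whose top-left block is $\FF_q$-invertible, the resulting count is a genuine lower bound rather than an exact enumeration.
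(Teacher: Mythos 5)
Your proposal is correct and follows essentially the same route as the paper: the even case is read off directly from Proposition \ref{prop:sym} (since $\FF_q$-invertibility implies $\FF_2$-full-rank), and the odd case is handled by bordering an even-size symmetric zero-diagonal matrix with a new row/column and observing that exactly the $2^{n-1}$ border vectors lying in the $\FF_2$-row span of the inner block must be excluded, leaving $q^{n-1}-2^{n-1}$ choices. The only differences are cosmetic: the paper borders an arbitrary element of $\cM_{n-1,d}$ (yielding the recursion $(q^{n-1}-2^{n-1})|\cM_{n-1,d}|\le|\cM_{n,d}|$ before specializing), while you restrict the inner block to $\FF_q$-invertible matrices and spell out the kernel analysis, including the alternating-form identity $\bx^T A\bx=0$, which the paper leaves implicit.
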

\begin{proof}
We require that $\mathfrak{B}_\circ$ is full rank over $\FF_2$, thus if it is invertible this condition is satisfied. Then the value given in Proposition \ref{prop:sym} is a lower bound on the number of all acceptable matrices $\mathfrak{B}_\circ$. However, if $n$ is odd from Proposition \ref{prop:sym} we would have only $0$ as lower bound.

To tackle the $n$-odd case, we want to reduce it to the $n$-even case. We show how it is possible to construct a matrix $\mathfrak{B}_\circ$ for fixed values $n$ and $d$ starting from a given $\mathfrak{B}_\circ'$ defined for values $n'=n-1$, { where $n-1$ is even}, and $d'=d$.
Indeed, let $\mathfrak{B}_\circ'\in (\FF_{2^d})^{n-1 \times n-1}$ be such that it is full rank over $\FF_2$, symmetric and zero-diagonal. We need to construct the first row (the first column is the transpose of this) of $\mathfrak{B}_\circ$, setting all the others equal to the rows of $\mathfrak{B}'_\circ$. 
That is, 
$$
\mathfrak B_\circ=\left[\begin{array}{cc}
0&\bb\\
\bb^\intercal&\mathfrak{B}_\circ'
\end{array}\right].
$$
Then, we need to verify how many possible $\mathfrak{B}_\circ$ we can construct starting from $\mathfrak{B}_\circ'$, or equivalently how many vectors $\bb\in (\FF_{2^d})^{n-1}$ we can use, in order to enforce $\mathfrak{B}_\circ\in\cM_{n,d}$.

As $\mathfrak{B}_\circ'$ is full rank over $\FF_2$, summing the rows of this matrix we can create $2^{n-1}-1$ non-zero vectors. Thus, we can choose $2^{d(n-1)}-2^{n-1}$ different vectors $\bb$ of $(\FF_{2^d})^{n-1}$ to construct $\mathfrak{B}_\circ$. Hence, for any $n$ we have $(2^{d(n-1)}-2^{n-1})|\cM_{n-1,d}|\le|\cM_{n,d}|$ and, in particular, for $n$ odd we have at least $(2^{d(n-1)}-2^{n-1}) \nu(n-1,d)$ possible matrices $\mathfrak{B}_\circ$.
\end{proof}
\begin{proposition}
Let $N = n + d$ and $V = (\mathbb{F}_2)^N$. Then
\[  \lvert \mathcal{M}_{n,d}\rvert  = \begin{cases}
2^d - 1 						&  \text{if  $n = 2$} \\
(2^d + 3)(2^d - 1)(2^d - 2) 				& \text{if $n = 3$}\\
2^{\binom{n}{2}} \prod_{j=1}^{\lceil{\frac{n-1}{2}}\rceil} \left( 1 - 2^{1 - 2j}\right) 		&  \text{if $d = 1$ and $n$ is even}\\
0 		&   \text{if $d = 1$ and $n$ is odd}
\end{cases}\]
\end{proposition}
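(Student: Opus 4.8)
The plan is to use the description of $\cM_{n,d}$ established above: it is the set of $n\times n$ matrices $\mathfrak B_\circ$ over $\FF_{2^d}$ that are symmetric (condition (II)), zero-diagonal (condition (III)), and of full rank over $\FF_2$ (condition (I)). I would treat the four cases separately, the $d=1$ ones reducing to Proposition~\ref{prop:sym} and the $n=2,3$ ones to an explicit parametrisation. Throughout set $q=2^d$.

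The two cases with $d=1$ are immediate. When $d=1$ we have $\FF_{2^d}=\FF_2$, so condition (I), ``full rank over $\FF_2$'', is exactly invertibility of $\mathfrak B_\circ$ over its own field of entries. Hence $\cM_{n,1}$ is precisely the set of $n\times n$ invertible symmetric zero-diagonal matrices over $\FF_2$, counted by Proposition~\ref{prop:sym} with $q=2$; this yields the stated value when $n$ is even and $0$ when $n$ is odd.

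For $n=2$ the only freedom is the single off-diagonal entry, so $\mathfrak B_\circ=\left[\begin{smallmatrix}0&b\\ b&0\end{smallmatrix}\right]$ with $b\in\FF_{2^d}$. Its rows $(0,b),(b,0)\in(\FF_{2^d})^2$ are $\FF_2$-dependent exactly when $b=0$, so (I) holds iff $b\neq0$, giving $|\cM_{2,d}|=q-1=2^d-1$. The case $n=3$ is the heart of the argument. Here $\mathfrak B_\circ=\left[\begin{smallmatrix}0&a&b\\ a&0&c\\ b&c&0\end{smallmatrix}\right]$ with $(a,b,c)\in(\FF_{2^d})^3$, and I would count the triples whose rows $r_1=(0,a,b)$, $r_2=(a,0,c)$, $r_3=(b,c,0)$ are $\FF_2$-dependent and subtract from $q^3$. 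Dependence means $\epsilon_1r_1+\epsilon_2r_2+\epsilon_3r_3=0$ for some nonzero $(\epsilon_1,\epsilon_2,\epsilon_3)\in\FF_2^3$, and each of the seven choices cuts out a one-dimensional $\FF_{2^d}$-subspace of $(\FF_{2^d})^3$: e.g.\ $r_1=0$ forces $a=b=0$, $r_1+r_2=0$ forces $a=0,\ b=c$, and $r_1+r_2+r_3=0$ forces $a=b=c$. These seven lines are the $\FF_{2^d}$-spans of the seven nonzero vectors of $\FF_2^3$, hence are pairwise distinct and meet only at the origin, so their union contains $7(q-1)+1=7q-6$ points. Therefore
$$
|\cM_{3,d}|=q^3-(7q-6)=q^3-7q+6=(q+3)(q-1)(q-2),
$$
which is the claimed $(2^d+3)(2^d-1)(2^d-2)$.

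The only delicate step is the $n=3$ count: one must enumerate all seven dependence conditions, identify each as a line through the origin, and handle their overlaps (each pairwise intersection being just $\{0\}$) so as not to overcount the ``bad'' set. It is worth stressing that here one genuinely needs the weaker $\FF_2$-rank condition rather than $\FF_{2^d}$-invertibility, since for odd $n$ no symmetric zero-diagonal matrix over $\FF_{2^d}$ is invertible (Proposition~\ref{prop:sym}), yet $\cM_{3,d}$ is far from empty. The remaining cases are either a direct substitution into Proposition~\ref{prop:sym} or a one-line verification.
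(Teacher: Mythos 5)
Your proof is correct, and its overall skeleton matches the paper's: both treat the four cases separately, both dispose of $d=1$ by observing that for binary entries the $\FF_2$-rank condition is just invertibility, so Proposition~\ref{prop:sym} applies with $q=2$, and both settle $n=2$ by the same one-line observation that only $b=0$ must be excluded. The genuine difference is in the $n=3$ case, which is indeed the substantive one. The paper counts the \emph{good} triples directly by a case analysis on $(\bx,\by,\bz)$ (splitting on $\bx\neq 0$ versus $\bx=0$, then on the value of $\by$), arriving at $(2^d-1)\bigl(2(2^d-2)+2^d(2^d-2)\bigr)+(2^d-1)(2^d-2)=(2^d-1)(2^d-2)(2^d+3)$. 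You instead count the \emph{bad} triples and subtract: each of the seven nonzero $\FF_2$-dependence patterns among the rows cuts out a one-dimensional $\FF_{2^d}$-subspace of $(\FF_{2^d})^3$, these seven lines are the $\FF_{2^d}$-spans of the seven nonzero vectors of $(\FF_2)^3$ (distinct nonzero binary vectors are never $\FF_{2^d}$-proportional, since a proportionality constant would have to lie in $\FF_2$), so the bad locus is a union of seven lines meeting pairwise only in the origin, of cardinality $7(q-1)+1$, giving $q^3-7q+6=(q+3)(q-1)(q-2)$. Your complementary-counting argument is more structural and less error-prone than the paper's casework, and it makes transparent \emph{why} the answer is a nice polynomial in $q$ with integer roots; the paper's direct enumeration is more elementary but requires care not to miss or double-count subcases. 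Both are complete and yield the same value.
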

\begin{proof}
The cases $n=2$ and $n=3$ were already proved in \cite{calderini2017elementary}. We restate here the proof using symmetric matrices, obtaining a shorter and clearer proof.

If $n=2$ then
\[ \mathfrak{B}_\circ = \begin{bmatrix} 0 & \bx \\ \bx & 0 \end{bmatrix}\]
 with $ \bx \in \mathbb{F}_{2^d}$. Of course the only vector that we have to avoid is the zero vector. Thus we can use $2^d-1$ vectors $\bx$ to construct $\mathfrak B_\circ$.

If $n=3$ then
\[ \mathfrak{B}_\circ = \begin{bmatrix} 0 & \by & \bz \\ \by & 0 & \bx \\ \bz & \bx & 0 \end{bmatrix}\]
with $ \bx,\by,\bz \in \mathbb{F}_{2^d}$ such that $\mathfrak{B}_\circ\in \cM_{n,d}$.

{ We need to find all the triples $(\bx,\by,\bz)$ such that the matrix $\mathfrak{B}_\circ$ is full rank over $\FF_2$, that is, summing any rows of it we do not obtain the zero vector.} Let us consider the different cases:
\begin{itemize}
	\item $\bx \neq 0$ : $2^d - 1$ possible values for $\bx$,
	\begin{itemize}
		\item $\by = 0$ : so $\bz \notin \{ 0,\bx\}$ and so we get $2^d - 2$ possible values
		\item $\by = \bx$ : so $\bz \notin \{ 0,\bx\}$ and so we get $2^d - 2$ possible values
		\item $\by \neq \bx$ : so $\bz$ can be any element, $\by \notin \{ 0,\bx \}$. So $2^d(2^d-2)$ possible pairs $(\by,\bz)$.
	\end{itemize}
	\item $\bx = 0$ : we have $\by \neq 0$ and $ z \notin \{ 0,y \}$ and so we get $(2^d-1)(2^d-2)$ possible pairs $(\by,\bz)$.
\end{itemize}

Summing all the possible triple $(\bx,\by,\bz)$, we get
\[ (2^d - 1)(2^d - 2 + 2^d - 2 + 2^d (2^d -2)) + (2^d -1)(2^d - 2) = (2^d-1)(2^d-2)(2^d + 3) \]

Now, let $n\ge 2$ and fix $d=1$. As we consider $\mathbb{F}_{2^d} = \mathbb{F}_2$, we have that $\cM_{n,d}$ is the set of symmetric matrices having all zeros on the diagonal and invertible. From Proposition \ref{prop:sym} we have our claim.
\end{proof}

Let us compare the upper bound and the lower bound on the cardinality of $\cM_{n,d}$. 
We recall a theorem on the infinite product convergence criteria, corollary of the monotone convergence theorem (more details can be found in \cite{stewart2015calculus}).
\begin{lemma}\-\\\label{limite}
Let $\{a_j\}_{j\in \mathbb{N}} \subseteq \mathbb{R}_+$. Then
$\displaystyle\prod_{j=1}^{\infty} a_j$ converges if and only if $\displaystyle\sum_{j=1}^{\infty} \ln (a_j)$ converges.\\
Moreover, assume $a_j \geq 1$, and denote  $a_j = 1 + p_j$. Then
\[ 1 + \sum_{j=1}^\infty p_j\leq {\displaystyle\prod_{j=1}^\infty} (1 + p_j) \leq e^{\sum_{j=1}^\infty p_j}, \]
that is, the infinite product converges if and only if the infinite sum of the $p_j$ converges.
\end{lemma}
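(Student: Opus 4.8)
The plan is to pass between products and sums through the exponential, and then control the product by two elementary comparisons: the bound $1+x\le e^x$ and the nonnegativity of the cross terms in a finite product. Throughout I would work with partial products and partial sums and take limits at the end, invoking the monotone convergence theorem for the final equivalence.

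For the first equivalence, I would introduce the partial products $P_N=\prod_{j=1}^N a_j$ and the partial sums $S_N=\sum_{j=1}^N\ln a_j$, and observe that $P_N=e^{S_N}$ for every $N$, which is legitimate since each $a_j>0$. Because $\exp$ is continuous on $\mathbb{R}$ with continuous inverse $\ln$ on $(0,\infty)$, the sequence $\{S_N\}$ converges to a finite limit $s$ if and only if $\{P_N\}$ converges to the positive limit $e^s$. This is exactly the stated equivalence, under the usual convention that a convergent infinite product has a nonzero limit.

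For the inequality chain I would use the hypothesis $a_j\ge 1$, that is $p_j\ge 0$. The upper bound follows termwise from $1+p_j\le e^{p_j}$: multiplying over $j=1,\dots,N$ gives $\prod_{j=1}^N(1+p_j)\le e^{\sum_{j=1}^N p_j}$, and letting $N\to\infty$ yields the right-hand inequality. For the lower bound I would expand the finite product $\prod_{j=1}^N(1+p_j)$; since every $p_j\ge 0$, every term of the expansion is nonnegative, so discarding all terms involving two or more factors leaves $\prod_{j=1}^N(1+p_j)\ge 1+\sum_{j=1}^N p_j$. Passing to the limit gives the left-hand inequality.

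Finally, the equivalence that the product converges if and only if $\sum_j p_j$ converges falls out of this sandwich together with monotone convergence: since $p_j\ge 0$, both the partial products and the partial sums are nondecreasing. If $\sum_j p_j$ converges, the partial products are bounded above by $e^{\sum_j p_j}$ and hence converge; if $\sum_j p_j$ diverges then $\sum_{j=1}^N p_j\to\infty$, so by the lower bound the partial products tend to $+\infty$ and the product diverges. I do not expect a genuine obstacle here: the only points needing a little care are fixing the convention that convergence of the product means a nonzero limit, so that the logarithm transfer in the first part is an honest equivalence, and recording the elementary estimate $1+x\le e^x$, which is what powers both the transfer and the upper bound.
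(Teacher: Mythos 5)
Your proof is correct. Note that the paper itself gives no proof of this lemma: it is recalled from the literature as a corollary of the monotone convergence theorem (with a pointer to Stewart's \emph{Calculus}), and your argument --- the identity $P_N = e^{S_N}$ with the nonzero-limit convention for infinite products, the termwise bound $1+x \le e^x$ for the upper estimate, expansion of the finite product for the lower estimate, and monotone convergence for the final equivalence --- is exactly the standard proof the paper's citation gestures at, so there is nothing to reconcile.
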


\begin{proposition}\label{prop:ratio}
Let $\mu(n,d)$ be the bound given in Proposition \ref{prop:upperbound} and let $\nu(n,d)$ be the bound given in Proposition \ref{lowerbound}. Let $q = 2^d$ and $d\ge 2$. If $n$ is even, then
\[\frac{\mu(n,d)}{\nu(n,d)} \le {\displaystyle\prod_{j=1}^\infty} \frac{1}{(1-q^{1-2j})}  \leq e^{\frac{q+1}{q(q-1)}}\]
 if $n$ is odd, then 
\[\frac{\mu(n,d)}{\nu(n,d)} \le {2\displaystyle\prod_{j=1}^\infty} \frac{1}{(1-q^{1-2j})}  \leq 2 e^{\frac{q+1}{q(q-1)}}.\]
\end{proposition}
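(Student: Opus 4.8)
The plan is to estimate numerator and denominator separately against the single clean quantity $q^{\binom{n}{2}}$, and then reduce both parities to controlling the infinite product $\prod_{j=1}^{\infty}(1-q^{1-2j})^{-1}$. On the numerator I would only use the crude estimate $\mu(n,d)\le q^{\binom{n}{2}}$, which is immediate from Proposition \ref{prop:upperbound}, since there $\mu(n,d)$ equals $q^{\binom{n}{2}}=2^{d\binom{n}{2}}$ minus a sum of nonnegative terms.

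For $n$ even I substitute $\nu(n,d)=q^{\binom{n}{2}}\prod_{j=1}^{\lceil(n-1)/2\rceil}(1-q^{1-2j})$; the factor $q^{\binom{n}{2}}$ cancels and leaves $\mu/\nu\le\prod_{j=1}^{\lceil(n-1)/2\rceil}(1-q^{1-2j})^{-1}$. Each factor is $>1$, because $0<q^{1-2j}<1$ for $j\ge1$ and $q\ge2$, so the finite product is bounded above by the infinite one; this gives the first claimed inequality. The remaining task is the bound $\prod_{j=1}^{\infty}(1-q^{1-2j})^{-1}\le e^{(q+1)/(q(q-1))}$.

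To obtain this I would write $a_j=(1-q^{1-2j})^{-1}=1+p_j$ with $p_j=q^{1-2j}/(1-q^{1-2j})\ge0$ and apply Lemma \ref{limite}, which yields $\prod_j(1+p_j)\le e^{\sum_j p_j}$ (the product converging because $\sum_j p_j$ does); so it suffices to show $\sum_{j\ge1}p_j\le (q+1)/(q(q-1))$. Since $q^{1-2j}\le q^{-1}$ one has $1-q^{1-2j}\ge (q-1)/q$, hence $p_j\le \tfrac{q}{q-1}q^{1-2j}$, and summing the geometric series $\sum_{j\ge1}q^{1-2j}=q/(q^2-1)$ gives $\sum_j p_j\le q^2/((q-1)^2(q+1))$. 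This last quantity is $\le (q+1)/(q(q-1))$ precisely when $q^2-q-1\ge0$, which holds for all $q=2^d\ge2$; this finishes the even case.

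For $n$ odd I substitute the second branch of $\nu(n,d)$. Using $\binom{n}{2}-\binom{n-1}{2}=n-1$, the ratio collapses to $\frac{q^{n-1}}{q^{n-1}-2^{n-1}}\prod_{j=1}^{\lceil(n-2)/2\rceil}(1-q^{1-2j})^{-1}$, and the product is again at most the infinite product estimated above. It then remains to check the extra scalar $\frac{q^{n-1}}{q^{n-1}-2^{n-1}}\le 2$, which is equivalent to $2\cdot2^{n-1}\le q^{n-1}=2^{d(n-1)}$, i.e. $n\le d(n-1)$; this holds for $d\ge2$ and $n\ge3$, producing the factor-$2$ versions of both inequalities. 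The only genuinely delicate points are the geometric-series estimate together with the elementary inequality $q^2-q-1\ge0$ in the even case, and the verification $n\le d(n-1)$ needed for the bound by $2$ in the odd case; everything else is cancellation of powers of $q$ and monotonicity of the partial products.
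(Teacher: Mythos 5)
Your proposal is correct and follows essentially the same route as the paper: the crude bound $\mu(n,d)\le q^{\binom{n}{2}}$, cancellation against $\nu(n,d)$, Lemma \ref{limite} applied to the same $p_j=\frac{q}{q^{2j}-q}$, and for odd $n$ the identity $\binom{n}{2}-\binom{n-1}{2}=n-1$ together with the factor-$2$ bound on $\frac{q^{n-1}}{q^{n-1}-2^{n-1}}$. The only (immaterial) difference is in estimating $\sum_j p_j$: the paper splits off $j=1$ and uses $p_j\le q^{-j}$ for $j\ge 2$ to land exactly on $\frac{q+1}{q(q-1)}$, while you use the uniform bound $p_j\le\frac{q}{q-1}q^{1-2j}$, sum the geometric series, and close with the elementary check $q^2-q-1\ge 0$.
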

\begin{proof}
We have
$$
\begin{aligned}
\mu(n,d)=&q^{\binom{n}{2}}-1-\sum_{r=1}^{n-2}{n\choose r}\left(q-1\right)^{\binom{n-r}{2}}\\
	    \le&q^{\binom{n}{2}}.
\end{aligned}
$$
Consider the case $n$ even
$$
\begin{aligned}
\frac{\mu(n,d)}{\nu(n,d)}& \le \frac{q^{\binom{n}{2}}}{q^{\binom{n}{2}} { \prod_{j=1}^{\lceil{\frac{n-1}{2}}\rceil}} \left( 1 - q^{1 - 2j}\right)}\\
&= \prod_{j=1}^{\lceil{\frac{n-1}{2}}\rceil}\frac{1}{{ \left( 1 - q^{1 - 2j}\right)}}\\
&\le\prod_{j=1}^{\infty}\frac{1}{{ \left( 1 - q^{1 - 2j}\right)}},
\end{aligned}
$$
since we can write
\[ \frac{1}{1-q^{1-2j}} = \frac{q^{2j-1}}{q^{2j-1}-1} = 1 + \frac{q}{q^{2j} - q}. \]
Defining 
\[ p_j := \frac{q}{q^{2j} - q}\]
we trivially have that $p_n \geq 0$ and

\[ 1 + \sum_{j=1}^\infty p_j \leq {\displaystyle\prod_{j=1}^\infty} \frac{1}{(1-q^{1-2j})} \leq e^{\sum_{j=1}^\infty p_j}. \]

We have for any $j \ge 2$
\[ p_j \leq \frac{1}{q^j}.\]
From this, we get
\[\sum_{i=1}^\infty p_j \leq p_1+\sum_{i=2}^\infty \frac{1}{q^j}= \frac{2}{q-1}-\frac{1}{q}=\frac{q+1}{q(q-1)} \]
and so
\[ {\displaystyle\prod_{j=1}^\infty} \frac{1}{(1-q^{1-2j})} \leq e^{\frac{q+1}{q(q-1)} }. \]

Now let $n$ be odd,
$$
\begin{aligned}
\frac{\mu(n,d)}{\nu(n,d)}& \le \frac{q^{\binom{n}{2}}}{(q^{n-1}-2^{n-1})\cdot q^{\binom{n-1}{2}} { \prod_{j=1}^{\lceil{\frac{n-2}{2}}\rceil}}  \left( 1 - q^{1 - 2j}\right)}.
\end{aligned}
$$
Since $\binom{j}{i}=\binom{j-1}{i-1}+\binom{j-1}{i}$, $n\ge 2$ and $d\ge 2$  we have
$$
\begin{aligned}
\frac{\mu(n,d)}{\nu(n,d)}& \le \frac{q^{{n-1}}}{(q^{n-1}-2^{n-1})\cdot { \prod_{j=1}^{\lceil{\frac{n-2}{2}}\rceil}}  \left( 1 - q^{1 - 2j}\right)}\\
& = \frac{1}{\left(1-\frac{2^{n-1}}{q^{n-1}}\right)\cdot { \prod_{j=1}^{\lceil{\frac{n-2}{2}}\rceil}}  \left( 1 - q^{1 - 2j}\right)}\\
&\le\frac{1}{\left(1-\frac{2^{n-1}}{q^{n-1}}\right)}\prod_{j=1}^{\infty}\frac{1}{{ \left( 1 - q^{1 - 2j}\right)}}\\
&\le2\prod_{j=1}^{\infty}\frac{1}{{ \left( 1 - q^{1 - 2j}\right)}}\le 2 e^{\frac{q+1}{q(q-1)}}.
\end{aligned}
$$
\end{proof}

Note that in Proposition \ref{prop:ratio}, the comparison for the case $d=1$ is avoided as the lower bound is the exact value of $|\cM_{n,d}|$.

\begin{corollary}
For all possible values of $n\ge 2$ and $d\ge 2$ we have
$$
\lim_{n\rightarrow \infty}\frac{\mu(n,d)}{\nu(n,d)}=\prod_{j=1}^{\infty}\frac{1}{{ \left( 1 - q^{1 - 2j}\right)}}
$$
and
$$
\lim_{d\rightarrow \infty}\frac{\mu(n,d)}{\nu(n,d)}=1.
$$
\end{corollary}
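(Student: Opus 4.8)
The plan is to reduce both limits to a single normalised statement. I would write
\[
\frac{\mu(n,d)}{\nu(n,d)}=\frac{\mu(n,d)/q^{\binom{n}{2}}}{\nu(n,d)/q^{\binom{n}{2}}},
\]
divide numerator and denominator by the dominant factor $q^{\binom{n}{2}}$, compute each limit separately, and then take the quotient of the limits, which is legitimate once the denominator limit is shown to be a nonzero number. The whole argument then rests on the asymptotics of the two normalised quantities in the two regimes ($n\to\infty$ with $d$ fixed, and $d\to\infty$ with $n$ fixed).

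First I would establish the key estimate $\mu(n,d)/q^{\binom{n}{2}}\to 1$, valid in \emph{both} regimes. Recalling from Proposition \ref{prop:upperbound} that $\mu(n,d)=q^{\binom{n}{2}}-1-\sum_{r=1}^{n-2}\binom{n}{r}(q-1)^{\binom{n-r}{2}}$, this amounts to showing that the correction $q^{-\binom{n}{2}}\bigl(1+\sum_{r=1}^{n-2}\binom{n}{r}(q-1)^{\binom{n-r}{2}}\bigr)$ vanishes. The algebraic identity $\binom{n}{2}-\binom{n-r}{2}=\tfrac12 r(2n-r-1)$, whose value is at least $n-1$ for $1\le r\le n-2$, lets me factor each summand as $\binom{n}{r}\bigl(\tfrac{q-1}{q}\bigr)^{\binom{n-r}{2}}q^{-\frac12 r(2n-r-1)}\le 2^n q^{-(n-1)}$, so the whole sum is at most $(n-2)2^n q^{-(n-1)}$. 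Since $q=2^d\ge 4$, this is bounded by $(n-2)2^{-(n-2)}$, which tends to $0$ as $n\to\infty$; and for fixed $n$ it also tends to $0$ as $d\to\infty$, each of the finitely many summands being $O(q^{-(n-1)})$. Combined with $q^{-\binom{n}{2}}\to 0$, this proves the claim.

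Next I would compute the normalised lower bound from the closed form in Proposition \ref{lowerbound}. For $n$ even, $\nu(n,d)/q^{\binom{n}{2}}=\prod_{j=1}^{\lceil (n-1)/2\rceil}(1-q^{1-2j})$; as $n\to\infty$ this converges to $\prod_{j=1}^{\infty}(1-q^{1-2j})$ by Lemma \ref{limite}, and for fixed $n$ with $d\to\infty$ it is a finite product each factor of which tends to $1$, so it tends to $1$. For $n$ odd, using $q^{\binom{n}{2}}=q^{\binom{n-1}{2}}q^{n-1}$ I obtain $\nu(n,d)/q^{\binom{n}{2}}=\bigl(1-(2/q)^{n-1}\bigr)\prod_{j=1}^{\lceil (n-2)/2\rceil}(1-q^{1-2j})$, where the extra factor $1-(2/q)^{n-1}$ tends to $1$ in both regimes because $2/q\le 1/2$. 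Hence the odd case yields the same two limits as the even case, and dividing the numerator limit by the denominator limit gives $\prod_{j=1}^{\infty}1/(1-q^{1-2j})$ as $n\to\infty$ and $1$ as $d\to\infty$.

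The step I expect to be the main obstacle is the $d\to\infty$ limit in the odd case: the bound of Proposition \ref{prop:ratio} only yields $\mu/\nu\le 2e^{(q+1)/(q(q-1))}\to 2$, which is too weak to force the limit to be $1$. This is exactly why I would avoid routing the argument through Proposition \ref{prop:ratio} and instead extract the sharper factor $1-(2/q)^{n-1}$ directly from the closed form of $\nu(n,d)$; checking that this factor together with the finite product both converge to $1$ is the delicate point, but it becomes routine once the normalisation by $q^{\binom{n}{2}}$ is in place.
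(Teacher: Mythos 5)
Your proof is correct. For the $n\to\infty$ limit it follows essentially the paper's own route: the paper likewise pulls $q^{\binom{n}{2}}$ out of both $\mu(n,d)$ and $\nu(n,d)$ (writing $q^{\binom{n}{2}}=q^{\binom{n-1}{2}}q^{n-1}$ in the odd case, which produces exactly your factor $1-(2/q)^{n-1}$) and shows that the correction $\bigl(1+\sum_{r=1}^{n-2}\binom{n}{r}(q-1)^{\binom{n-r}{2}}\bigr)/q^{\binom{n}{2}}$ vanishes; its estimate (bounding $(q-1)^{\binom{n-r}{2}}$ by $q^{\binom{n-1}{2}}$) and yours (the exponent identity $\binom{n}{2}-\binom{n-r}{2}=\tfrac12 r(2n-r-1)\ge n-1$) are interchangeable. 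Where you genuinely differ is the $d\to\infty$ limit: the paper does not recompute anything but squeezes, citing from the \emph{proof} of Proposition \ref{prop:ratio} the inequalities $1\le\mu/\nu\le e^{\frac{q+1}{q(q-1)}}$ for $n$ even and $1\le\mu/\nu\le\frac{1}{1-(2/q)^{n-1}}\,e^{\frac{q+1}{q(q-1)}}$ for $n$ odd, both of which tend to $1$ as $q=2^d\to\infty$. So your objection that the \emph{statement} of Proposition \ref{prop:ratio} (the $2e^{\frac{q+1}{q(q-1)}}$ bound) is too weak for the odd case is accurate, but the paper sidesteps it by invoking the sharper intermediate inequality inside that proposition's proof rather than its conclusion. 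Your unified normalization buys two small things: both limits fall out of a single computation, and you never need the a priori inequality $\mu\ge\nu$ (which the squeeze tacitly uses, and which holds only because $\nu\le|\cM_{n,d}|\le\mu$ bound the same quantity). One small repair to your write-up: Lemma \ref{limite} is stated for factors $\ge 1$, so to justify existence and positivity of $\lim_m\prod_{j=1}^{m}(1-q^{1-2j})$ --- which your quotient-of-limits step requires --- you should apply it to the reciprocals $1+p_j=1/(1-q^{1-2j})$ and invert, exactly as the paper does.
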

\begin{proof}
Consider the case $n$ odd, then 
$$
\begin{aligned}
\frac{\mu(n,d)}{\nu(n,d)}& = \frac{q^{\binom{n}{2}}\left(1-\frac{1}{q^{\binom{n}{2}}}-\frac{\sum_{r=1}^{n-2}{n\choose r}\prod_{i=1}^{\binom{n-r}{2}}\left(q-1\right)}{q^{\binom{n}{2}}}\right)}{(q^{n-1}-2^{n-1})\cdot q^{\binom{n-1}{2}} { \prod_{j=1}^{\lceil{\frac{n-2}{2}}\rceil}}  \left( 1 - q^{1 - 2j}\right)}\\
& = \frac{\left(1-\frac{1}{q^{\binom{n}{2}}}-\frac{\sum_{r=1}^{n-2}{n\choose r}\prod_{i=1}^{\binom{n-r}{2}}\left(q-1\right)}{q^{\binom{n}{2}}}\right)}{\left(1-\frac{2^{n-1}}{q^{n-1}}\right)\cdot { \prod_{j=1}^{\lceil{\frac{n-2}{2}}\rceil}}  \left( 1 - q^{1 - 2j}\right)}\\
&=\frac{\left(1-\frac{1}{q^{\binom{n}{2}}}-\frac{\sum_{r=1}^{n-2}{n\choose r}\prod_{i=1}^{\binom{n-r}{2}}\left(q-1\right)}{q^{\binom{n}{2}}}\right)}{\left(1-\frac{2^{n-1}}{q^{n-1}}\right)}\prod_{j=1}^{\lceil{\frac{n-2}{2}}\rceil}\frac{1}{{ \left( 1 - q^{1 - 2j}\right)}}
\end{aligned}
$$
Consider 
\begin{equation}\label{eq:limit}
\begin{aligned}
\frac{\sum_{r=1}^{n-2}{n\choose r}\prod_{i=1}^{\binom{n-r}{2}}\left(q-1\right)}{q^{\binom{n}{2}}}&=\frac{\sum_{r=1}^{n-2}{n\choose r}\left(q-1\right)^{\binom{n-r}{2}}}{q^{\binom{n}{2}}}\\
&\le\sum_{r=1}^{n-2}{n\choose r}q^{\binom{n-1}{2}-\binom{n}{2}}\\
&\le \frac{2^{n-2}}{q^{n-1}}.
\end{aligned}
\end{equation}
This implies that the limit of \eqref{eq:limit}, as $n$ approaches infinity, is $0$ and then the limit
$$
\lim_{n\rightarrow \infty}\frac{q^{\binom{n}{2}}\left(1-\frac{1}{q^{\binom{n}{2}}}-\frac{\sum_{r=1}^{n-2}{n\choose r}\prod_{i=1}^{\binom{n-r}{2}}\left(q-1\right)}{q^{\binom{n}{2}}}\right)}{(q^{n-1}-2^{n-1})\cdot q^{\binom{n-1}{2}} { \prod_{j=1}^{\lceil{\frac{n-2}{2}}\rceil}}  \left( 1 - q^{1 - 2j}\right)}=\prod_{j=1}^{\infty}\frac{1}{{ \left( 1 - q^{1 - 2j}\right)}}.
$$

The case $n$ even is similar.

Moreover, from the proof of Proposition \ref{prop:ratio} we have
$$
1\le\frac{\mu(n,d)}{\nu(n,d)}\le
\begin{cases}
 e^{\frac{q+1}{q(q-1)}} &\text{ if $n$ is even}\\
  \frac{1}{\left(1-\frac{2^{n-1}}{q^{n-1}}\right)}e^{\frac{q+1}{q(q-1)}}&\text{ if $n$ is odd}
\end{cases}
$$
and immediately we have that 
$$
\lim_{d\rightarrow \infty}\frac{\mu(n,d)}{\nu(n,d)}=1.
$$
\end{proof}
{ \begin{remark}
From the results obtained in this section we can see that if we would tackle Problem 3. given in Section 2, then it is not possible to search among all the possible practical hidden sums of a given space $V$. Indeed, it will be computationally hard, given the huge amount of these operations, also in small dimensions.
\end{remark}
}
\section{On hidden sums for linear maps}
In this section we investigate Problem 3 given in Section 2 page \pageref{prob3}. In particular we want to see if, for a given $\gl\in\GL(V,+)$, it is possible to individuate an alternative sum $\circ $ such that $\gl \in \GL(V,\circ)$.
\begin{proposition}\label{prop:mix1}
Let $T_\circ\subseteq\AGL(V,+)$ and $\gl\in \GL(V,+)\cap\GL(V,\circ)$ then $U(T_\circ)$ is invariant under the action of $\gl$, i.e. $U(T_\circ)\gl=U(T_\circ)$.
\end{proposition}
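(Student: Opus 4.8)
The plan is to exploit that $\gl$, being linear for both $+$ and $\circ$, normalizes \emph{both} translation groups $T_+$ and $T_\circ$, and to combine this with the description of $U(T_\circ)$ as the set of vectors on which the two translations coincide.

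First I would record the characterization $U(T_\circ)=\{\ba\in V\mid \gt_\ba=\gs_\ba\}$. By definition $U(T_\circ)=\{\bv\mid \gs_\bv\in T_\circ\}$; since $T_\circ$ is regular and the unique element of $T_\circ$ sending $0$ to $\bv$ is $\gt_\bv$ (indeed $0\gt_\bv=0\circ\bv=\bv=0\gs_\bv$), membership $\gs_\bv\in T_\circ$ forces $\gs_\bv=\gt_\bv$. Equivalently, recalling $\gt_\ba=\gk_\ba\gs_\ba$ and that $\gk_\ba=1_V$ if and only if $\ba\in U(T_\circ)$, the equality $\gt_\ba=\gs_\ba$ holds exactly on $U(T_\circ)$.

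Next I would verify the two conjugation identities, being careful with the right-action convention $\bv g$. For $\gl\in\GL(V,+)$ a direct computation gives $\bx(\gl^{-1}\gs_\bv\gl)=(\bx\gl^{-1}+\bv)\gl=\bx+\bv\gl$, so $\gl^{-1}\gs_\bv\gl=\gs_{\bv\gl}$ and $\gl$ normalizes $T_+$. For $\gl\in\GL(V,\circ)$, using $(\bu\circ\bw)\gl=\bu\gl\circ\bw\gl$ one finds $\bx(\gl^{-1}\gt_\bv\gl)=(\bx\gl^{-1}\circ\bv)\gl=\bx\circ\bv\gl=\bx\gt_{\bv\gl}$, so $\gl^{-1}\gt_\bv\gl=\gt_{\bv\gl}$ and $\gl$ normalizes $T_\circ$. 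These combine immediately: for $\ba\in U(T_\circ)$ we have $\gt_\ba=\gs_\ba$, hence
\[
\gt_{\ba\gl}=\gl^{-1}\gt_\ba\gl=\gl^{-1}\gs_\ba\gl=\gs_{\ba\gl},
\]
so $\ba\gl\in U(T_\circ)$; that is, $U(T_\circ)\gl\subseteq U(T_\circ)$. To upgrade this inclusion to equality I would observe that $U(T_\circ)$ is a finite $\FF_2$-subspace and $\gl$ is injective, so $\gl$ restricts to a bijection of $U(T_\circ)$; alternatively, since $\gl^{-1}$ also lies in $\GL(V,+)\cap\GL(V,\circ)$, the identical argument gives $U(T_\circ)\gl^{-1}\subseteq U(T_\circ)$, i.e. the reverse inclusion $U(T_\circ)\subseteq U(T_\circ)\gl$.

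I do not expect a serious obstacle: the argument is essentially that $\gl$ conjugates the pair $(T_+,T_\circ)$ to itself and therefore preserves the locus where the two translation systems agree. The only points demanding care are the bookkeeping with the right action when deriving the two conjugation formulas, and the use of regularity to pin down $\gs_\bv=\gt_\bv$ whenever $\gs_\bv\in T_\circ$.
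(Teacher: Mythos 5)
Your proof is correct and takes essentially the same approach as the paper: the paper's single chain of equalities $\bx\circ\by\gl=(\bx\gl^{-1}\circ\by)\gl=(\bx\gl^{-1}+\by)\gl=\bx+\by\gl$ is exactly your two conjugation identities $\gl^{-1}\gt_\by\gl=\gt_{\by\gl}$ and $\gl^{-1}\gs_\by\gl=\gs_{\by\gl}$ composed, transporting the condition $\gt_\by=\gs_\by$ along $\gl$. The only difference is that you explicitly justify passing from the inclusion $U(T_\circ)\gl\subseteq U(T_\circ)$ to the stated equality (by injectivity of $\gl$ on the finite subspace, or by running the argument for $\gl^{-1}$), a step the paper's proof leaves implicit.
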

\begin{proof}
Recall that $\gl$ is linear with respect to both $+$ and $\circ$. Let $\by\in U(T_\circ)$, { and so $\x\circ\by=\bx+\by$ for any $\bx$}. Thus, for all $\bx$ we have
$$
\bx \circ\by\gl=\bx\gl^{-1}\gl\circ \by\gl=(\bx\gl^{-1}\circ \by)\gl=(\bx\gl^{-1}+ \by)\gl=\bx+ \by\gl.
$$
That implies $\by\gl\in U(T_\circ)$, and so $U(T_\circ)\gl\subseteq U(T_\circ)$.
\end{proof}

\begin{proposition}\label{prop:mix2}
Let $T_\circ\subseteq\AGL(V,+)$ and $\gl\in\GL(V,+)$. Then $\gl$ is in $\GL(V,+)\cap\GL(V,\circ)$ if and only if for all $\bx\in V$ we have
\begin{equation}\label{eq:comm}
\gk_\bx\gl=\gl\gk_{\bx\gl},
\end{equation}
where $\gt_\bx=\gk_\bx\gs_\bx$.
\end{proposition}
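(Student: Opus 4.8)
The plan is to convert the $\circ$-linearity of $\gl$ into the stated commutation relation by unwinding the factorization $\gt_\bx=\gk_\bx\gs_\bx$. Since $T_\circ$ is the translation group of $\circ$, the defining relation $\bx\circ\by=\bx\gt_\by$ of the Remark gives, for all $\bx,\by\in V$,
$$
\bx\circ\by=\bx\gt_\by=\bx\gk_\by\gs_\by=\bx\gk_\by+\by.
$$
This single formula, expressing $\circ$ through the linear parts $\gk_\by$ and the ordinary sum, is what I would rely on throughout.

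Next I would write the defining condition $\gl\in\GL(V,\circ)$, namely $(\bx\circ\by)\gl=\bx\gl\circ\by\gl$ for all $\bx,\by$, and expand both sides using the formula above together with the hypothesis $\gl\in\GL(V,+)$, so that $\gl$ distributes over $+$. The left-hand side becomes $(\bx\gk_\by+\by)\gl=\bx\gk_\by\gl+\by\gl$, while the right-hand side, obtained by applying the same formula with $\by$ replaced by $\by\gl$, becomes $(\bx\gl)\gk_{\by\gl}+\by\gl=\bx\gl\gk_{\by\gl}+\by\gl$.

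Then I would cancel the common summand $\by\gl$, reducing the $\circ$-linearity condition to $\bx\gk_\by\gl=\bx\gl\gk_{\by\gl}$ for all $\bx,\by\in V$. Because both $\gk_\by\gl$ and $\gl\gk_{\by\gl}$ are $\FF_2$-linear maps of $V$, their agreement on every $\bx\in V$ is equivalent to the operator identity $\gk_\by\gl=\gl\gk_{\by\gl}$; and since every step of the derivation is an equivalence, the two conditions are equivalent. Renaming $\by$ as $\bx$ yields exactly \eqref{eq:comm}, so both directions of the \emph{if and only if} are established at once, and no use of $\gl^{-1}$ is needed.

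The computation is short and each step reverses, so the genuine content is bookkeeping rather than a conceptual difficulty. The one subtlety I would be careful about is the right-action convention: it fixes the composition order as $\gk_\by\gl$ on the left versus $\gl\gk_{\by\gl}$ on the right, and in particular it forces the subscript of the rightmost $\gk$ to be $\by\gl$ rather than $\by$, since that factor arises from expanding $(\bx\gl)\circ(\by\gl)$. Placing this subscript correctly is the only point where an error could realistically creep in.
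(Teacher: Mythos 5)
Your proposal is correct and follows essentially the same route as the paper's proof: expand $(\bx\circ\by)\gl$ and $\bx\gl\circ\by\gl$ via $\bx\circ\by=\bx\gk_\by+\by$ and the $+$-linearity of $\gl$, cancel $\by\gl$, and read off the operator identity $\gk_\by\gl=\gl\gk_{\by\gl}$. The only cosmetic difference is that the paper writes the two implications separately, whereas you observe that every step is reversible and get both directions at once; the content is identical.
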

\begin{proof}
Let $\by$ be fixed. Then for all $\bx$ we have
\[ (\bx \circ \by)\lambda = \bx\lambda \circ \by\lambda \]
	where
	$$
	\begin{aligned}
	 	(\bx \circ \by)\lambda &= (\bx \gk_\by + \by)\lambda\\
	 	&= \bx\gk_\by\lambda + \by\lambda
	\end{aligned} 
	$$
	and 
	$$
	\begin{aligned}
	 	\bx\lambda \circ \by\lambda &= \bx\lambda \gk_{\by\lambda} + \by\lambda .
	\end{aligned} 
	$$
	Imposing the equality we get
\[\bx ( \gk_\by \lambda) = \bx \lambda \gk_{\by\lambda}.\]
Vice versa let $\bx,\by\in V$ 
$$
\begin{aligned}
(\bx\circ\by)\gl&=\bx\gk_\by\gl+\by\gl\\
&=\bx\gl\gk_{\by\gl}+\by\gl\\
&=\bx\gl\circ\by\gl.
\end{aligned}
$$
\end{proof}

Now we will characterize the linear maps which are also linear for an operation $\circ$, such that $U(T_\circ)$ is generated by the last elements of the canonical basis.
\begin{proposition}\label{prop:lambda}
Let $V=(\FF_2)^N$, with $N=n+d$, $n\ge 2$ and $d\ge 1$. Let $T_\circ\subseteq\AGL(V,+)$ with $U(T_\circ)=\Span\{\be_{n+1},...,\be_{n+d}\}$. Let $\gl\in\GL(V,+)$. Then $\gl\in\GL(V,+)\cap\GL(V,\circ)$ if and only if
$$
\gl=\left[\begin{array}{cc}
\Lambda_1&\Lambda_2\\
0&\Lambda_3
\end{array}\right],
$$
with $\Lambda_1\in \GL((\FF_2)^n)$, $\Lambda_3\in \GL((\FF_2)^d)$, $\Lambda_2$ any matrix { in $ (\FF_2)^{n\times d}$} and for all $\bx \in V$ $B_\bx \Lambda_3=\Lambda_1B_{\bx\gl}$ (see Theorem \ref{th:forma} for the notation of $B_\bx$). 
\end{proposition}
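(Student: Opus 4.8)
The plan is to combine the two structural results just established, Proposition~\ref{prop:mix1} and Proposition~\ref{prop:mix2}, with the explicit shape of the maps $\gk_\bx$ coming from Theorem~\ref{th:forma}, and then collapse everything into a single block-matrix identity. Throughout I keep the paper's convention that maps act on the right, so a linear map is identified with the matrix $M$ for which $\bx\mapsto\bx M$, and I write $\gl$ in the block form
$$
\gl=\left[\begin{array}{cc}\Lambda_1&\Lambda_2\\ \Lambda_0&\Lambda_3\end{array}\right],
$$
with $\Lambda_1\in(\FF_2)^{n\times n}$, $\Lambda_3\in(\FF_2)^{d\times d}$, $\Lambda_2\in(\FF_2)^{n\times d}$ and $\Lambda_0\in(\FF_2)^{d\times n}$.

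First I would treat the forward implication. Assuming $\gl\in\GL(V,+)\cap\GL(V,\circ)$, Proposition~\ref{prop:mix1} tells us that $U(T_\circ)=\Span\{\be_{n+1},\dots,\be_{n+d}\}$ is $\gl$-invariant. Since $\be_{n+j}\gl$ is exactly the $(n+j)$-th row of $\gl$, invariance forces the first $n$ entries of each such row to vanish, i.e. $\Lambda_0=0$, so $\gl$ is block upper triangular. Because $\gl$ is invertible and triangular, its diagonal blocks $\Lambda_1$ and $\Lambda_3$ are themselves invertible. This already yields the claimed shape of $\gl$; it only remains to extract the relation between the $B_\bx$.

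The core step invokes Proposition~\ref{prop:mix2}: $\gl$ lies in $\GL(V,+)\cap\GL(V,\circ)$ precisely when $\gk_\bx\gl=\gl\gk_{\bx\gl}$ for every $\bx$. Substituting $\gk_\bx=\left[\begin{smallmatrix}I_n&B_\bx\\0&I_d\end{smallmatrix}\right]$ from Theorem~\ref{th:forma} and the triangular form of $\gl$, a direct block multiplication shows that $\gk_\bx\gl$ has upper-right block $\Lambda_2+B_\bx\Lambda_3$ while $\gl\gk_{\bx\gl}$ has upper-right block $\Lambda_2+\Lambda_1B_{\bx\gl}$, and all other blocks coincide automatically. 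Hence the commutation relation is equivalent to the single identity $B_\bx\Lambda_3=\Lambda_1B_{\bx\gl}$ for all $\bx$, which is exactly the condition in the statement.

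For the converse I would run this backwards: a block upper-triangular matrix with invertible diagonal blocks is invertible, so $\gl\in\GL(V,+)$, and the same block computation shows that the hypothesis $B_\bx\Lambda_3=\Lambda_1B_{\bx\gl}$ makes $\gk_\bx\gl=\gl\gk_{\bx\gl}$ hold for all $\bx$, whence $\gl\in\GL(V,\circ)$ by Proposition~\ref{prop:mix2}. I expect the only delicate point to be the bookkeeping in the forward direction, namely reading off $\Lambda_0=0$ correctly under the right-action convention and checking that invertibility of the triangular $\gl$ descends to both diagonal blocks; the block multiplication itself is routine, and since $B_\bx$ depends linearly on $\bx$ (with $B_\bx=\sum_i x_iB_{\be_i}$) the final identity need only be verified on $\bx=\be_1,\dots,\be_n$.
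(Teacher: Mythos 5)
Your proof is correct and follows essentially the same route as the paper: Proposition \ref{prop:mix1} forces the lower-left block to vanish (hence both diagonal blocks are invertible), and Proposition \ref{prop:mix2} together with the block multiplication of $\gk_\bx\gl$ and $\gl\gk_{\bx\gl}$ reduces $\circ$-linearity to the identity $B_\bx\Lambda_3=\Lambda_1 B_{\bx\gl}$. The only difference is cosmetic: you carry out explicitly the multiplication the paper labels ``standard matrix multiplication,'' whereas the paper instead spells out that the condition is independent of $\Lambda_2$ --- a fact that is implicit in your computation, since $\Lambda_2$ cancels on both sides and $B_{\bx\gl}$ depends only on the first $n$ coordinates $\bar\bx\Lambda_1$ of $\bx\gl$.
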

\begin{proof}
Let 
$$
\gl=\left[\begin{array}{cc}
\Lambda_1&\Lambda_2\\
\Lambda_4&\Lambda_3
\end{array}\right],
$$
$\Lambda_1\in (\FF_2)^{n\times n}$, $\Lambda_3\in (\FF_2)^{d\times d}$, $\Lambda_2\in (\FF_2)^{n\times d}$ and $\Lambda_4\in (\FF_2)^{d\times n}$

From Proposition \ref{prop:mix1} we have that $\Lambda_4=0$ as $U(T_\circ)$ is $\gl$-invariant. Thus, as $\gl$ is invertible $\Lambda_1\in \GL((\FF_2)^n)$ and $\Lambda_3\in \GL((\FF_2)^d)$.

By standard matrix multiplication it can be verified that \eqref{eq:comm} and $B_\bx \Lambda_3=\Lambda_1B_{\bx\gl}$ are equivalent. Then, we need to show that this condition does not depend on the matrix $\Lambda_2$. Indeed, let $\bx=(x_1,...,x_n,x_{n+1},...,x_{n+d})\in V$, and define $\bar\bx=(x_1,...,x_n)$ and $\bx'=(x_{n+1},...,x_{n+d})$, i.e. $\bx=(\bar\bx,\bx')$. As reported in Section 3 page  \pageref{condizionei}, we have $\gk_\bx=\gk_{(\bar\bx,0)}$, which implies $B_\bx=B_{(\bar\bx,0)}$, for all $\bx\in V$.
Then, $B_\bx \Lambda_3=\Lambda_1B_{\bx\gl}$ for all $\bx\in V$ if and only if $B_{(\bar\bx,0)} \Lambda_3=\Lambda_1B_{(\bar\bx\Lambda_1,0)}$ for all $\bar\bx=(x_1,...,x_n)$.
So we have 
$$
\left[\begin{array}{cc}
\Lambda_1&\Lambda_2\\
0&\Lambda_3
\end{array}\right]\in\GL(V,\circ),
$$
for any $\Lambda_2$.
\end{proof}

In the following, as $\Lambda_2$ in Proposition \ref{prop:lambda} could be any matrix, we will consider linear maps of type
$$
\left[\begin{array}{cc}
\Lambda_1&*\\
0&\Lambda_3
\end{array}\right]\in\GL(V,\circ),
$$
with $\Lambda_1\in \GL((\FF_2)^n)$, $\Lambda_3\in \GL((\FF_2)^d)$ and $*$ denotes for any matrix of size $n\times d$.

\begin{remark}\label{rm:bas}
From the propositions above, if we want to find an operation $\circ$ that linearizes a linear map $\gl\in\GL(V,+)$, i.e. we want to enforce
$$
\left[\begin{array}{cc}
\Lambda_1&*\\
0&\Lambda_3
\end{array}\right]\in\GL(V,\circ),
$$
then we have to construct some matrices $B_\bx$'s such that $B_\bx \Lambda_3=\Lambda_1B_{\bx\gl}$ for all $\bx$. Moreover, as the standard vectors $\be_i$'s form a basis for the operation $\circ$, then we need to individuate only the matrices $B_{\be_i}$, so that $B_{\be_i} \Lambda_{ 3}=\Lambda_1B_{\be_i\gl}$, and in particular that
$$
B_{\be_i} \Lambda_{ 3}=\Lambda_1B_{\be_i\gl}=\Lambda_1\left(\sum_{i=1}^{n}c_iB_{\be_i}\right),
$$
where $c_1,\dots,c_n$ are the first components of the vector $\be_i\gl$.
\end{remark}
%
%
~\\

\begin{algorithm} \label{searchop}
~ \\
INPUT: 
$$\gl=\left[\begin{array}{cc}
\Lambda_1&*\\
0&\Lambda_3
\end{array}\right],
$$
with $\Lambda_1\in \GL((\FF_2)^n)$, $\Lambda_{ 3}\in \GL((\FF_2)^d)$\\
OUTPUT: all {practical} hidden sums (with $\gk_{\be_i}$ as in \eqref{eq:gammai}) such that:
\begin{itemize}
\item $T_\circ\subseteq\AGL(V,+)$, $T_+\subseteq\AGL(V,\circ)$,
\item $U(T_\circ)$ contains $\be_{n+1},...,\be_{n+d}$,
\item $\gl\in\GL(V,\circ)$.
\end{itemize}
ALGORITHM STEPS:\\
\begin{itemize}
\item[I)] Consider the canonical basis $\be_{1},...,\be_{n+d}$ and compute $\be_{1}\gl,...,\be_{n}\gl$
\item[II)] Solve the linear system given by the equations:
\begin{enumerate}
\item for all $i=1,...,n$
$$
B_{\be_i} \Lambda_{ 3}=\Lambda_1B_{\be_i\gl}=\Lambda_1\left(\sum_{i=1}^{n}c_iB_{\be_i}\right),
$$
(where $c_1,...,c_n$ are the first components of the vector $\be_i\gl$).

\item for all $i=n+1,...,n+d$
$$
B_{\be_i} =0,
$$

\item for all $i=1,...,n$
$$
\bar\be_iB_{\be_i} =0,
$$
(here $\bar\be_i$ is the truncation of $\be_i$ with respect to the first $n$ coordinates)
\item  for all $i,j=1,...,n$
$$
\bar\be_iB_{\be_j} =\bar\be_jB_{\be_i},
$$
\end{enumerate}
\item[III)] return the solutions $\{B_{\be_i}\}_{i=1,...,n+d}$.
\end{itemize}
\end{algorithm}
\noindent{{\em Correctness of Algorithm \ref{searchop}:}\\
Note that thanks to the form of the maps $\gk_{\be_i}$ given in Theorem \ref{th:forma}, we always have $\gk_{\be_i}^2=1_V$, whatever the matrix $B_{\be_i}$ is. This property, with condition $3$ in the algorithm, implies $\gt_{\be_i}^2=1_V$, so $T_\circ=\langle\gt_{\be_{1}},...,\gt_{\be_{n+d}} \rangle$ is elementary.\\
 Condition $4$ guarantees that $T_\circ$ is abelian. Then, $T_\circ$ is also regular (see Corollary 3.8 in \cite{calderini2017elementary} for more details).
This implies that $T_\circ=\langle\gt_{\be_{1}},...,\gt_{\be_{n+d}} \rangle$ is a practical hidden sum.\\
The first condition, as seen in Proposition \ref{prop:lambda}, is equivalent to having $\gk_{\be_i}\gl=\gl\gk_{{\be_i}\gl}$ for all $\be_i$, and for Remark \ref{rm:bas}, we have also $\gk_{\bx}\gl=\gl\gk_{{\bx}\gl}$ for any $\bx$. Then, from Proposition \ref{prop:mix2} we have $\gl\in\GL(V,\circ)$. To conclude, condition $2$ implies that $U(T_\circ)$ contains $\be_{n+1},...,\be_{n+d}$.

Viceversa, consider $T_\circ$ a {practical} hidden sums (with $\gk_{\be_i}$ as in \eqref{eq:gammai}) such that:
\begin{itemize}
\item $T_\circ\subseteq\AGL(V,+)$, $T_+\subseteq\AGL(V,\circ)$,
\item $U(T_\circ)$ contains $\be_{n+1},...,\be_{n+d}$,
\item $\gl\in\GL(V,\circ)$.
\end{itemize}
 We need to check that $T_\circ$ is an output of the algorithm. Equivalently, we need to check that the matrices $B_{\be_i}$ associated to this group satisfies the condition of the system in Algorithm \ref{searchop}.

Since $T_\circ$ is elementary and abelian, conditions $3$ and $4$ are satisfied. Also, condition $2$ holds because $U(T_\circ)$ contains $\be_{n+1},...,\be_{n+d}$. For the first condition, since $\gl\in\GL(V,\circ)$ we have $\gk_{\bx}\gl=\gl\gk_{{\bx}\gl}$ for any $\bx$, which is equivalent to $B_\bx \Lambda_3=\Lambda_1B_{\bx\gl}$ for all $\bx$, and in particular for all $\be_i$.
}\\

Note that, from Algorithm \ref{searchop} we obtain operations $\circ$ such that \\$\{\be_{n+1},\dots\be_{n+d}\}\subseteq U(T_\circ)$. Indeed, we required that for $n+1\le i\le n+d$, $B_{\be_i} =0$, but we did not require that the combinations of $B_{\be_{1}},...,B_{\be_{n}}$ are non-zero. So, if we want to construct hidden sums with $\dim(U(T_\circ))=d$, we just process the solution of Algorithm \ref{searchop} and discharge the $\mathfrak{B}_\circ$'s such that $\mathrm{rank}_{\FF_2}(\mathfrak{B}_\circ)<n$.
%

\subsection{Complexity of our search algorithm}\label{rm:vect}
To solve the linear system in our algorithm, we represent the matrices as vectors:

{\footnotesize
\[ \begin{bmatrix}
b_{1,1} & \cdots & b_{1,d}\\
\vdots & \ddots & \vdots \\
b_{n,1} & \cdots & b_{n,d}
\end{bmatrix} \longleftrightarrow \begin{bmatrix}
\bovermat{1\text{-st row}}{b_{1,1} , ... ,b_{1,d}} , \bovermat{2\text{-nd row}}{b_{2,1} ,... , b_{2,d}} ,& ...& , \bovermat{n\text{-th row}}{b_{n,1} , ... , b_{n,d}}
\end{bmatrix}\]
}

\begin{itemize}
	\item From $i=1,...,n$, $\sum_{j} c_j\Lambda_1B_{\be_j} =  B_{\be_i} \Lambda_3$ we have $n^2  d$ linear equation in $n^2  d$ variables.
	\item From $\bar\be_i B_{\be_i} = 0$ and $\bar\be_i B_{\be_j} = \bar\be_j B_{\be_i}$ we obtain $\binom{n+1}{2}d$ linear equations, in the same variables.
\end{itemize}
So, we only need to find a solution of a binary linear system  of size $$\left(n^2d + \binom{n+1}{2}d\right) \cdot n^2d.$$\\

Therefore, we have immediately the following result

\begin{proposition}
The time complexity of Algorithm \ref{searchop} is $\mathcal{O}\left( n^6 d^3 \right)$ and the space complexity is $\mathcal{O}\left( l \cdot 2^{d-1} n^2  \right)$ where $l$ is the dimension of the solution subspace.
\end{proposition}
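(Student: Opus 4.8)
The plan is to isolate the dominant cost of Algorithm~\ref{searchop}, which is Step~II, the resolution of the homogeneous $\FF_2$-linear system assembled from conditions~1--4; Steps~I and~III only contribute lower-order terms. I would treat the two claims separately: the time bound follows from the dimensions of that system together with the cost of Gaussian elimination over $\FF_2$, while the space bound is output-sensitive and is governed by the size of a basis of the solution space.

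For the time bound I would first record the size of the system, as already prepared in Subsection~\ref{rm:vect}. The unknowns are the entries of $B_{\be_1},\dots,B_{\be_n}$, the remaining $B_{\be_i}$ being forced to $0$ by condition~2, hence $n^2 d$ binary variables. Condition~1 contributes $n^2 d$ equations and conditions~3--4 contribute $\binom{n+1}{2}d$ equations, so the coefficient matrix has both dimensions in $\Theta(n^2 d)$. Assembling it needs Step~I, namely the $n$ matrix--vector products $\be_1\gl,\dots,\be_n\gl$ over $(\FF_2)^{n+d}$, at cost $\mathcal{O}(n(n+d)^2)$, which is dominated. Solving an $\FF_2$-linear system whose coefficient matrix is $\Theta(n^2 d)\times\Theta(n^2 d)$ by Gaussian elimination costs $\mathcal{O}((n^2 d)^3)=\mathcal{O}(n^6 d^3)$ bit operations, giving the stated time complexity.

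For the space bound I would observe that the returned object is exactly the solution set of the homogeneous system, an $\FF_2$-subspace of $(\FF_2)^{n^2 d}$ of dimension $l$, stored through a basis of $l$ vectors. Each basis vector encodes a tuple $(B_{\be_1},\dots,B_{\be_n})$, equivalently the matrix $\mathfrak{B}_\circ\in(\FF_{2^d})^{n\times n}$ of Subsection~\ref{rm:vect}, hence $n^2$ entries over $\FF_{2^d}$. Charging the encoding of one $\FF_{2^d}$-entry, which is what is needed to instantiate and later evaluate $\circ$, as $\mathcal{O}(2^{d-1})$ gives $\mathcal{O}(2^{d-1}n^2)$ per solution and $\mathcal{O}(l\cdot 2^{d-1} n^2)$ for the whole output; note that since $d\le 2^{d-1}$ this bound also absorbs the bare $l\cdot n^2 d$ bits of the raw solution vectors, so it is valid a fortiori.

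The routine ingredients are the equation/unknown count of Subsection~\ref{rm:vect} and the cubic cost of Gaussian elimination over $\FF_2$. The step that needs care is the space accounting: one must fix the storage model, justify the per-entry $\mathcal{O}(2^{d-1})$ factor from the representation of $\FF_{2^d}$ chosen to support $\circ$-arithmetic rather than the naive $d$ bits, and make explicit that the reported figure is output-sensitive in $l$, the scratch memory $\mathcal{O}(n^4 d^2)$ of the elimination itself being a separate additive term that the statement, being concerned with the size of the family of returned operations, suppresses.
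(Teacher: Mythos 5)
Your proposal is correct and takes essentially the same route as the paper: the paper deduces the proposition ``immediately'' from the equation/unknown count of Subsection~\ref{rm:vect}, i.e.\ a binary system of size $\left(n^2d + \binom{n+1}{2}d\right)\cdot n^2d$, with cubic-cost elimination giving $\mathcal{O}(n^6d^3)$ time and storage of a basis of the solution space giving the space bound. Your additional care on the space accounting (the $\mathcal{O}(2^{d-1})$ per-entry charge versus the naive $d$ bits, and the output-sensitivity in $l$ with the elimination's scratch memory set aside) only makes explicit what the paper leaves unstated.
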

%
%
%
In Table \ref{compsearchn} we report some timings for different dimensions of the message space $V$, fixing the value of $d$ equal to $2$. 

\begin{table}[!h]
\centering
\begin{tabular}{|c|c|c|}
\hline
Dimension of $V$ & $n$, $d$ & Timing in second\\
\hline\hline
$64$ & $62$, $2$ &  32.620 seconds \\
$80$ & $78$, $2$ &   84.380 seconds \\
$96$ & $94$, $2$ &    188.200 seconds \\
$112$ & $110$, $2$ &  338.590 seconds \\
$128$ & $126$, $2$ &  616.670 seconds\\
\hline
\end{tabular}
\caption{Computation timing for a Mac Book Pro 15'' early 2011, 4 GB Ram, Intel i7 2.00~Ghz.}\label{compsearchn}
\end{table}
%

\section{Hidden sums for PRESENT's mixing layer}

%
%
%

Here we report our results on the search for a hidden sum suitable for the mixing layer of PRESENT, $\gl_P$, which is  defined by the permutation reported in Table \ref{tab:gl}.

\begin{table}[h]
\centering
\begin{tabular}{|c|c|c|c|c|c|c|c|c|c|c|c|c|c|c|c|c|}
\hline
$i$&1&2&3&4&5&6&7&8&9&10&11&12&13&14&15&16\\

$i\lambda_P$&1&17&33&49&2&18&34&50&3&19&35&51&4&20&36&52\\
\hline
\hline
$i$&17&18&19&20&21&22&23&24&25&26&27&28&29&30&31&32\\

$i\lambda_P$&5&21&37&53&6&22&38&54&7&23&39&55&8&24&40&56\\
\hline
\hline
$i$&33&34&35&36&37&38&39&40&41&42&43&44&45&46&47&48\\

$i\lambda_P$&9&25&41&57&10&26&42&58&11&27&43&59&12&28&44&60\\
\hline
\hline
$i$&49&50&51&52&53&54&55&56&57&58&59&60&61&62&63&64\\

$i\lambda_p$&13&29&45&61&14&30&46&62&15&31&47&63&16&32&48&64\\
\hline
\end{tabular}\caption{PRESENT mixing layer}\label{tab:gl}
\end{table}

Algorithm \ref{searchop} requires in input a linear function in the block form:
\begin{equation}\label{eq:la}
 \lambda' = \left(\begin{matrix} \Lambda_1 & * \\ 0 & \Lambda_3 \end{matrix}\right) 
 \end{equation}
with $\Lambda_1\in\GL((\FF_2)^n)$ and $\Lambda_3\in\GL((\FF_2)^d)$ for some integers $n$ and $d$.

Note that a matrix as in \eqref{eq:la} is such that the space $U'=\Span\{\be_{n+1},\dots,\be_{n+d}\}$ is $\lambda'$-invariant. So, in order to transform a mixing layer $\gl$ into one as in \eqref{eq:la}, it is necessary to individuate a subspace $U$ such that $U\gl=U$. Then, by conjugating $\gl$ by a linear map $\pi$ such that $U\pi=\Span\{\be_{n+1},\dots,\be_{n+d}\}$, we will have $\pi \lambda \pi^{-1}$ as in \eqref{eq:la}. From a group $T_\circ$ obtained from Algorithm~\ref{searchop} for the map $\pi \lambda \pi^{-1}$ we will obtain a hidden sum for $\gl$, that is, $ \pi^{-1} T_\circ\pi$.

For this reason we consider the matrix given by the permutation
\[ \pi_P = (1 , {61})({22}, {62})({43},{63}) \in \Sym(\{ i \mid i = 1,...,64 \})\]
so that
\[ \pi_P \lambda_P \pi_P^{-1} = \left(\begin{matrix} \Lambda_1 & 0 \\ 0 & I_{4} \end{matrix}\right) = \hat{\lambda}_P \]
We can now apply Algorithm \ref{searchop} and obtain all the possible operations that linearize $\hat{\lambda}_P$. The operation space will be denoted by $O$.\\
The time required to compute the operation space $O$ is $\sim 10.420$ seconds and it is generated by $2360$ $60$-tuples of $60\times 4$ matrices. So the number of operations that linearize $\hat{\lambda}_P$, in the form described in Theorem \ref{th:forma}, is $| O| = 2^{2360}$.


We take a random operation $\circ$ (Table \ref{rendomopPresent}) obtained by our algorithm. The operation has $\mathrm{rank}_{\FF_2}(\mathfrak{B}_\circ) = 60 = n$ and so the operation is such that $\dim (U(T_\circ)) = 4 = d$. To compress the table, we represent every row of $B_{e_i}$ as a number in $\{0,...,(2^4 -1)\}$, and, as we did in Remark \ref{rm:vect},  row $i$ represents the matrix $B_{\be_i}$.

\begin{table}
\begin{tikzpicture}
\node[rotate=90,text width=0.9\textheight] (a) 
{
{\fontsize{24}{20}\selectfont
\resizebox{\linewidth}{!}{$ \begin{array}{c||cccccccccccccccccccccccccccccccccccccccccccccccccccccccccccc}
 B_{e_{ 1 }} &   {0} & 8 & 0 & 10 & 8 & 9 & 7 & 3 & 0 & 9 & 4 & 15 & 1 & 15 & 7 & 15 & 8 & 14 & 7 & 0 & 4 & 5 & 9 & 0 & 15 & 1 & 10 & 10 & 2 & 15 & 6 & 7 & 0 & 5 & 7 & 5 & 10 & 8 & 6 & 6 & 10 & 4 & 8 & 7 & 1 & 2 & 8 & 0 & 11 & 14 & 9 & 11 & 14 & 7 & 13 & 1 & 5 & 3 & 8 & 0 \\ 
 B_{e_{ 2 }} &   8 & 0 & 7 & 10 & 14 & 7 & 13 & 6 & 7 & 8 & 3 & 12 & 15 & 13 & 11 & 6 & 14 & 15 & 9 & 11 & 14 & 15 & 12 & 6 & 9 & 6 & 0 & 3 & 14 & 8 & 15 & 6 & 10 & 0 & 10 & 4 & 11 & 8 & 3 & 9 & 9 & 0 & 8 & 6 & 7 & 2 & 2 & 8 & 8 & 5 & 14 & 2 & 1 & 6 & 0 & 9 & 12 & 11 & 12 & 8 \\ 
 B_{e_{ 3 }} &   0 & 7 & 0 & 2 & 10 & 4 & 14 & 5 & 15 & 8 & 12 & 15 & 0 & 15 & 15 & 15 & 7 & 7 & 6 & 7 & 3 & 1 & 12 & 6 & 8 & 15 & 12 & 15 & 9 & 11 & 3 & 14 & 15 & 4 & 2 & 9 & 0 & 7 & 6 & 6 & 15 & 2 & 0 & 9 & 14 & 13 & 8 & 0 & 12 & 7 & 6 & 11 & 5 & 9 & 12 & 13 & 15 & 10 & 1 & 0 \\ 
 B_{e_{ 4 }} &   10 & 10 & 2 & 0 & 8 & 8 & 2 & 15 & 12 & 15 & 12 & 5 & 7 & 3 & 12 & 12 & 15 & 12 & 8 & 8 & 3 & 14 & 15 & 14 & 14 & 8 & 11 & 9 & 7 & 4 & 10 & 6 & 0 & 6 & 5 & 0 & 9 & 10 & 0 & 1 & 1 & 2 & 12 & 0 & 8 & 3 & 13 & 1 & 7 & 8 & 6 & 8 & 8 & 8 & 4 & 8 & 0 & 2 & 2 & 11 \\ 
 B_{e_{ 5 }} &   8 & 14 & 10 & 8 & 0 & 15 & 0 & 5 & 7 & 9 & 10 & 14 & 10 & 11 & 4 & 2 & 14 & 14 & 11 & 1 & 7 & 6 & 8 & 6 & 13 & 12 & 3 & 0 & 6 & 6 & 9 & 9 & 7 & 9 & 9 & 12 & 8 & 6 & 0 & 11 & 3 & 0 & 6 & 12 & 12 & 3 & 6 & 8 & 15 & 14 & 7 & 8 & 13 & 8 & 2 & 15 & 11 & 15 & 2 & 8 \\ 
 B_{e_{ 6 }} &   9 & 7 & 4 & 8 & 15 & 0 & 12 & 15 & 7 & 10 & 10 & 6 & 12 & 0 & 0 & 12 & 14 & 0 & 8 & 8 & 0 & 5 & 13 & 9 & 11 & 7 & 5 & 0 & 12 & 14 & 11 & 5 & 3 & 2 & 15 & 1 & 7 & 4 & 10 & 5 & 7 & 11 & 12 & 11 & 9 & 8 & 2 & 4 & 3 & 0 & 0 & 12 & 15 & 1 & 1 & 4 & 11 & 12 & 7 & 14 \\ 
 B_{e_{ 7 }} &   7 & 13 & 14 & 2 & 0 & 12 & 0 & 4 & 4 & 7 & 13 & 6 & 6 & 5 & 10 & 10 & 9 & 11 & 4 & 13 & 2 & 3 & 8 & 0 & 11 & 7 & 2 & 6 & 10 & 14 & 10 & 10 & 8 & 11 & 11 & 9 & 2 & 12 & 11 & 13 & 13 & 8 & 11 & 1 & 8 & 10 & 7 & 15 & 14 & 11 & 11 & 13 & 14 & 6 & 9 & 2 & 13 & 1 & 13 & 5 \\ 
 B_{e_{ 8 }} &   3 & 6 & 5 & 15 & 5 & 15 & 4 & 0 & 7 & 1 & 11 & 3 & 8 & 7 & 6 & 0 & 14 & 12 & 10 & 5 & 0 & 0 & 14 & 6 & 11 & 9 & 14 & 2 & 6 & 0 & 14 & 5 & 9 & 10 & 11 & 0 & 9 & 10 & 14 & 7 & 8 & 8 & 13 & 8 & 6 & 6 & 9 & 2 & 7 & 6 & 15 & 14 & 5 & 11 & 4 & 15 & 4 & 4 & 5 & 14 \\ 
 B_{e_{ 9 }} &   0 & 7 & 15 & 12 & 7 & 7 & 4 & 7 & 0 & 6 & 2 & 6 & 2 & 7 & 9 & 11 & 10 & 3 & 0 & 5 & 4 & 14 & 7 & 9 & 14 & 12 & 6 & 12 & 5 & 6 & 6 & 13 & 15 & 8 & 15 & 15 & 8 & 15 & 2 & 10 & 12 & 12 & 15 & 1 & 15 & 15 & 9 & 0 & 0 & 9 & 14 & 0 & 15 & 11 & 13 & 1 & 15 & 3 & 8 & 0 \\ 
 B_{e_{ 10 }} &   9 & 8 & 8 & 15 & 9 & 10 & 7 & 1 & 6 & 0 & 6 & 3 & 8 & 7 & 9 & 13 & 11 & 7 & 6 & 8 & 8 & 6 & 12 & 7 & 4 & 9 & 1 & 12 & 10 & 2 & 8 & 1 & 0 & 2 & 11 & 1 & 6 & 4 & 0 & 8 & 5 & 15 & 12 & 2 & 5 & 0 & 12 & 10 & 9 & 9 & 4 & 2 & 15 & 8 & 5 & 5 & 10 & 12 & 6 & 7 \\ 
 B_{e_{ 11 }} &   4 & 3 & 12 & 12 & 10 & 10 & 13 & 11 & 2 & 6 & 0 & 12 & 5 & 2 & 12 & 7 & 9 & 7 & 5 & 0 & 15 & 11 & 5 & 10 & 11 & 9 & 14 & 3 & 11 & 12 & 10 & 3 & 15 & 13 & 12 & 13 & 11 & 2 & 11 & 1 & 12 & 13 & 10 & 13 & 3 & 7 & 8 & 10 & 1 & 8 & 5 & 0 & 13 & 9 & 3 & 9 & 9 & 14 & 0 & 7 \\ 
 B_{e_{ 12 }} &   15 & 12 & 15 & 5 & 14 & 6 & 6 & 3 & 6 & 3 & 12 & 0 & 6 & 10 & 15 & 0 & 7 & 9 & 5 & 13 & 0 & 11 & 9 & 15 & 11 & 7 & 8 & 3 & 15 & 0 & 3 & 9 & 14 & 8 & 3 & 12 & 4 & 6 & 7 & 1 & 5 & 0 & 9 & 9 & 7 & 5 & 0 & 1 & 8 & 6 & 7 & 13 & 13 & 1 & 12 & 12 & 1 & 6 & 11 & 9 \\ 
 B_{e_{ 13 }} &   1 & 15 & 0 & 7 & 10 & 12 & 6 & 8 & 2 & 8 & 5 & 6 & 0 & 8 & 0 & 8 & 8 & 3 & 9 & 8 & 8 & 6 & 10 & 8 & 2 & 15 & 0 & 4 & 15 & 14 & 1 & 8 & 12 & 14 & 1 & 0 & 15 & 8 & 2 & 2 & 12 & 11 & 12 & 2 & 5 & 9 & 0 & 11 & 7 & 7 & 8 & 12 & 3 & 4 & 3 & 14 & 12 & 10 & 13 & 10 \\ 
 B_{e_{ 14 }} &   15 & 13 & 15 & 3 & 11 & 0 & 5 & 7 & 7 & 7 & 2 & 10 & 8 & 0 & 2 & 4 & 1 & 15 & 15 & 4 & 8 & 14 & 10 & 3 & 13 & 4 & 0 & 0 & 5 & 5 & 11 & 0 & 5 & 14 & 13 & 11 & 8 & 12 & 11 & 13 & 0 & 8 & 0 & 0 & 13 & 5 & 14 & 14 & 8 & 5 & 13 & 3 & 4 & 1 & 8 & 5 & 5 & 6 & 13 & 0 \\ 
 B_{e_{ 15 }} &   7 & 11 & 15 & 12 & 4 & 0 & 10 & 6 & 9 & 9 & 12 & 15 & 0 & 2 & 0 & 3 & 12 & 11 & 10 & 5 & 1 & 3 & 3 & 14 & 9 & 0 & 6 & 10 & 0 & 15 & 1 & 0 & 15 & 13 & 9 & 9 & 1 & 12 & 14 & 2 & 13 & 0 & 5 & 4 & 12 & 0 & 1 & 5 & 0 & 4 & 1 & 3 & 11 & 2 & 8 & 8 & 9 & 10 & 4 & 5 \\ 
 B_{e_{ 16 }} &   15 & 6 & 15 & 12 & 2 & 12 & 10 & 0 & 11 & 13 & 7 & 0 & 8 & 4 & 3 & 0 & 8 & 12 & 12 & 0 & 12 & 4 & 4 & 4 & 13 & 4 & 13 & 0 & 14 & 5 & 11 & 11 & 0 & 11 & 10 & 5 & 2 & 0 & 5 & 10 & 0 & 2 & 1 & 4 & 13 & 8 & 8 & 8 & 12 & 13 & 9 & 1 & 3 & 8 & 15 & 6 & 3 & 7 & 12 & 11 \\ 
 B_{e_{ 17 }} &   8 & 14 & 7 & 15 & 14 & 14 & 9 & 14 & 10 & 11 & 9 & 7 & 8 & 1 & 12 & 8 & 0 & 7 & 8 & 13 & 15 & 9 & 6 & 8 & 0 & 8 & 0 & 2 & 5 & 6 & 11 & 15 & 7 & 13 & 3 & 11 & 9 & 12 & 0 & 15 & 10 & 3 & 2 & 2 & 14 & 0 & 12 & 8 & 10 & 6 & 12 & 6 & 11 & 6 & 3 & 6 & 4 & 9 & 6 & 8 \\ 
 B_{e_{ 18 }} &   14 & 15 & 7 & 12 & 14 & 0 & 11 & 12 & 3 & 7 & 7 & 9 & 3 & 15 & 11 & 12 & 7 & 0 & 10 & 0 & 0 & 4 & 7 & 14 & 2 & 4 & 11 & 8 & 0 & 1 & 12 & 5 & 4 & 12 & 10 & 0 & 8 & 13 & 5 & 11 & 15 & 10 & 12 & 2 & 0 & 1 & 7 & 9 & 8 & 15 & 6 & 12 & 8 & 9 & 0 & 5 & 1 & 5 & 11 & 4 \\ 
 B_{e_{ 19 }} &   7 & 9 & 6 & 8 & 11 & 8 & 4 & 10 & 0 & 6 & 5 & 5 & 9 & 15 & 10 & 12 & 8 & 10 & 0 & 7 & 7 & 5 & 9 & 2 & 2 & 4 & 15 & 0 & 9 & 8 & 12 & 6 & 8 & 7 & 6 & 9 & 6 & 12 & 1 & 8 & 11 & 0 & 2 & 12 & 4 & 5 & 6 & 9 & 15 & 1 & 3 & 13 & 8 & 7 & 12 & 1 & 1 & 8 & 2 & 10 \\ 
 B_{e_{ 20 }} &   0 & 11 & 7 & 8 & 1 & 8 & 13 & 5 & 5 & 8 & 0 & 13 & 8 & 4 & 5 & 0 & 13 & 0 & 7 & 0 & 15 & 5 & 4 & 5 & 14 & 12 & 8 & 5 & 5 & 1 & 6 & 14 & 15 & 5 & 2 & 2 & 15 & 10 & 0 & 11 & 13 & 11 & 3 & 14 & 13 & 8 & 13 & 15 & 3 & 7 & 10 & 4 & 4 & 3 & 0 & 0 & 11 & 13 & 0 & 14 \\ 
 B_{e_{ 21 }} &   4 & 14 & 3 & 3 & 7 & 0 & 2 & 0 & 4 & 8 & 15 & 0 & 8 & 8 & 1 & 12 & 15 & 0 & 7 & 15 & 0 & 5 & 4 & 1 & 12 & 13 & 10 & 1 & 15 & 9 & 5 & 4 & 7 & 11 & 7 & 11 & 10 & 7 & 11 & 12 & 10 & 5 & 12 & 7 & 6 & 0 & 11 & 14 & 12 & 12 & 9 & 12 & 0 & 14 & 8 & 5 & 0 & 11 & 2 & 9 \\ 
 B_{e_{ 22 }} &   5 & 15 & 1 & 14 & 6 & 5 & 3 & 0 & 14 & 6 & 11 & 11 & 6 & 14 & 3 & 4 & 9 & 4 & 5 & 5 & 5 & 0 & 10 & 9 & 10 & 13 & 3 & 10 & 5 & 8 & 2 & 1 & 13 & 2 & 9 & 8 & 1 & 1 & 1 & 1 & 3 & 12 & 11 & 7 & 9 & 8 & 11 & 1 & 8 & 15 & 12 & 6 & 0 & 0 & 13 & 1 & 0 & 0 & 0 & 7 \\ 
 B_{e_{ 23 }} &   9 & 12 & 12 & 15 & 8 & 13 & 8 & 14 & 7 & 12 & 5 & 9 & 10 & 10 & 3 & 4 & 6 & 7 & 9 & 4 & 4 & 10 & 0 & 4 & 12 & 12 & 1 & 9 & 10 & 11 & 1 & 13 & 15 & 7 & 9 & 0 & 4 & 12 & 6 & 1 & 2 & 4 & 0 & 2 & 6 & 9 & 7 & 1 & 8 & 9 & 7 & 4 & 12 & 13 & 12 & 1 & 12 & 15 & 0 & 8 \\ 
 B_{e_{ 24 }} &   0 & 6 & 6 & 14 & 6 & 9 & 0 & 6 & 9 & 7 & 10 & 15 & 8 & 3 & 14 & 4 & 8 & 14 & 2 & 5 & 1 & 9 & 4 & 0 & 6 & 13 & 3 & 13 & 11 & 12 & 9 & 8 & 11 & 14 & 12 & 15 & 8 & 11 & 9 & 12 & 9 & 4 & 8 & 1 & 1 & 9 & 9 & 15 & 4 & 0 & 0 & 5 & 1 & 12 & 9 & 0 & 2 & 7 & 2 & 7 \\ 
 B_{e_{ 25 }} &   15 & 9 & 8 & 14 & 13 & 11 & 11 & 11 & 14 & 4 & 11 & 11 & 2 & 13 & 9 & 13 & 0 & 2 & 2 & 14 & 12 & 10 & 12 & 6 & 0 & 8 & 11 & 9 & 4 & 0 & 13 & 2 & 4 & 11 & 13 & 13 & 7 & 7 & 8 & 1 & 13 & 2 & 10 & 13 & 6 & 6 & 1 & 5 & 6 & 10 & 8 & 11 & 5 & 14 & 10 & 3 & 10 & 10 & 7 & 7 \\ 
 B_{e_{ 26 }} &   1 & 6 & 15 & 8 & 12 & 7 & 7 & 9 & 12 & 9 & 9 & 7 & 15 & 4 & 0 & 4 & 8 & 4 & 4 & 12 & 13 & 13 & 12 & 13 & 8 & 0 & 6 & 12 & 14 & 4 & 1 & 1 & 7 & 12 & 2 & 12 & 12 & 12 & 4 & 15 & 5 & 1 & 4 & 0 & 9 & 9 & 2 & 8 & 10 & 10 & 6 & 0 & 10 & 11 & 9 & 10 & 3 & 1 & 7 & 9 \\ 
 B_{e_{ 27 }} &   10 & 0 & 12 & 11 & 3 & 5 & 2 & 14 & 6 & 1 & 14 & 8 & 0 & 0 & 6 & 13 & 0 & 11 & 15 & 8 & 10 & 3 & 1 & 3 & 11 & 6 & 0 & 5 & 14 & 9 & 4 & 12 & 2 & 8 & 13 & 0 & 0 & 4 & 9 & 7 & 11 & 9 & 5 & 0 & 7 & 13 & 13 & 4 & 2 & 8 & 0 & 2 & 11 & 4 & 10 & 1 & 14 & 2 & 5 & 6 \\ 
 B_{e_{ 28 }} &   10 & 3 & 15 & 9 & 0 & 0 & 6 & 2 & 12 & 12 & 3 & 3 & 4 & 0 & 10 & 0 & 2 & 8 & 0 & 5 & 1 & 10 & 9 & 13 & 9 & 12 & 5 & 0 & 4 & 9 & 1 & 8 & 13 & 10 & 7 & 0 & 5 & 9 & 13 & 5 & 3 & 10 & 15 & 4 & 12 & 5 & 3 & 2 & 3 & 6 & 5 & 8 & 8 & 9 & 5 & 13 & 8 & 7 & 8 & 13 \\ 
 B_{e_{ 29 }} &   2 & 14 & 9 & 7 & 6 & 12 & 10 & 6 & 5 & 10 & 11 & 15 & 15 & 5 & 0 & 14 & 5 & 0 & 9 & 5 & 15 & 5 & 10 & 11 & 4 & 14 & 14 & 4 & 0 & 6 & 7 & 15 & 7 & 11 & 8 & 4 & 1 & 9 & 8 & 4 & 11 & 14 & 0 & 5 & 3 & 2 & 8 & 14 & 8 & 6 & 6 & 13 & 7 & 0 & 6 & 0 & 6 & 14 & 9 & 3 \\ 
 B_{e_{ 30 }} &   15 & 8 & 11 & 4 & 6 & 14 & 14 & 0 & 6 & 2 & 12 & 0 & 14 & 5 & 15 & 5 & 6 & 1 & 8 & 1 & 9 & 8 & 11 & 12 & 0 & 4 & 9 & 9 & 6 & 0 & 12 & 0 & 9 & 6 & 9 & 2 & 7 & 13 & 4 & 7 & 10 & 3 & 4 & 2 & 15 & 13 & 1 & 7 & 8 & 11 & 1 & 8 & 3 & 12 & 9 & 9 & 14 & 9 & 9 & 0 \\ 
 B_{e_{ 31 }} &   6 & 15 & 3 & 10 & 9 & 11 & 10 & 14 & 6 & 8 & 10 & 3 & 1 & 11 & 1 & 11 & 11 & 12 & 12 & 6 & 5 & 2 & 1 & 9 & 13 & 1 & 4 & 1 & 7 & 12 & 0 & 0 & 10 & 1 & 14 & 10 & 8 & 15 & 2 & 4 & 1 & 7 & 10 & 4 & 1 & 5 & 4 & 3 & 2 & 4 & 6 & 7 & 13 & 7 & 7 & 1 & 2 & 4 & 0 & 6 \\ 
 B_{e_{ 32 }} &   7 & 6 & 14 & 6 & 9 & 5 & 10 & 5 & 13 & 1 & 3 & 9 & 8 & 0 & 0 & 11 & 15 & 5 & 6 & 14 & 4 & 1 & 13 & 8 & 2 & 1 & 12 & 8 & 15 & 0 & 0 & 0 & 1 & 3 & 11 & 3 & 5 & 10 & 3 & 2 & 9 & 1 & 6 & 11 & 12 & 13 & 0 & 5 & 14 & 0 & 11 & 4 & 5 & 9 & 10 & 1 & 8 & 1 & 7 & 1 \\ 
 B_{e_{ 33 }} &   0 & 10 & 15 & 0 & 7 & 3 & 8 & 9 & 15 & 0 & 15 & 14 & 12 & 5 & 15 & 0 & 7 & 4 & 8 & 15 & 7 & 13 & 15 & 11 & 4 & 7 & 2 & 13 & 7 & 9 & 10 & 1 & 0 & 14 & 12 & 15 & 6 & 12 & 12 & 3 & 2 & 6 & 11 & 8 & 6 & 12 & 1 & 0 & 2 & 5 & 15 & 15 & 7 & 6 & 15 & 14 & 9 & 6 & 9 & 0 \\ 
 B_{e_{ 34 }} &   5 & 0 & 4 & 6 & 9 & 2 & 11 & 10 & 8 & 2 & 13 & 8 & 14 & 14 & 13 & 11 & 13 & 12 & 7 & 5 & 11 & 2 & 7 & 14 & 11 & 12 & 8 & 10 & 11 & 6 & 1 & 3 & 14 & 0 & 13 & 10 & 4 & 8 & 2 & 10 & 11 & 11 & 13 & 7 & 11 & 9 & 13 & 7 & 2 & 4 & 6 & 10 & 13 & 0 & 6 & 10 & 9 & 13 & 1 & 15 \\ 
 B_{e_{ 35 }} &   7 & 10 & 2 & 5 & 9 & 15 & 11 & 11 & 15 & 11 & 12 & 3 & 1 & 13 & 9 & 10 & 3 & 10 & 6 & 2 & 7 & 9 & 9 & 12 & 13 & 2 & 13 & 7 & 8 & 9 & 14 & 11 & 12 & 13 & 0 & 12 & 5 & 5 & 14 & 10 & 12 & 11 & 0 & 8 & 5 & 3 & 0 & 4 & 12 & 11 & 12 & 7 & 0 & 10 & 3 & 3 & 13 & 1 & 13 & 10 \\ 
 B_{e_{ 36 }} &   5 & 4 & 9 & 0 & 12 & 1 & 9 & 0 & 15 & 1 & 13 & 12 & 0 & 11 & 9 & 5 & 11 & 0 & 9 & 2 & 11 & 8 & 0 & 15 & 13 & 12 & 0 & 0 & 4 & 2 & 10 & 3 & 15 & 10 & 12 & 0 & 10 & 3 & 6 & 1 & 9 & 14 & 3 & 1 & 1 & 8 & 4 & 7 & 12 & 6 & 15 & 3 & 5 & 14 & 10 & 0 & 9 & 2 & 4 & 5 \\ 
 B_{e_{ 37 }} &   10 & 11 & 0 & 9 & 8 & 7 & 2 & 9 & 8 & 6 & 11 & 4 & 15 & 8 & 1 & 2 & 9 & 8 & 6 & 15 & 10 & 1 & 4 & 8 & 7 & 12 & 0 & 5 & 1 & 7 & 8 & 5 & 6 & 4 & 5 & 10 & 0 & 9 & 15 & 12 & 6 & 1 & 13 & 6 & 3 & 12 & 2 & 7 & 8 & 10 & 5 & 12 & 7 & 2 & 0 & 6 & 9 & 8 & 12 & 9 \\ 
 B_{e_{ 38 }} &   8 & 8 & 7 & 10 & 6 & 4 & 12 & 10 & 15 & 4 & 2 & 6 & 8 & 12 & 12 & 0 & 12 & 13 & 12 & 10 & 7 & 1 & 12 & 11 & 7 & 12 & 4 & 9 & 9 & 13 & 15 & 10 & 12 & 8 & 5 & 3 & 9 & 0 & 1 & 1 & 9 & 6 & 4 & 7 & 7 & 12 & 0 & 9 & 15 & 14 & 9 & 4 & 4 & 4 & 9 & 13 & 0 & 1 & 2 & 1 \\ 
 B_{e_{ 39 }} &   6 & 3 & 6 & 0 & 0 & 10 & 11 & 14 & 2 & 0 & 11 & 7 & 2 & 11 & 14 & 5 & 0 & 5 & 1 & 0 & 11 & 1 & 6 & 9 & 8 & 4 & 9 & 13 & 8 & 4 & 2 & 3 & 12 & 2 & 14 & 6 & 15 & 1 & 0 & 4 & 13 & 9 & 2 & 13 & 0 & 10 & 5 & 10 & 11 & 14 & 8 & 13 & 8 & 3 & 5 & 12 & 0 & 7 & 0 & 4 \\ 
 B_{e_{ 40 }} &   6 & 9 & 6 & 1 & 11 & 5 & 13 & 7 & 10 & 8 & 1 & 1 & 2 & 13 & 2 & 10 & 15 & 11 & 8 & 11 & 12 & 1 & 1 & 12 & 1 & 15 & 7 & 5 & 4 & 7 & 4 & 2 & 3 & 10 & 10 & 1 & 12 & 1 & 4 & 0 & 14 & 2 & 7 & 4 & 6 & 7 & 0 & 6 & 10 & 14 & 3 & 11 & 6 & 9 & 1 & 0 & 10 & 4 & 4 & 3 \\ 
 B_{e_{ 41 }} &   10 & 9 & 15 & 1 & 3 & 7 & 13 & 8 & 12 & 5 & 12 & 5 & 12 & 0 & 13 & 0 & 10 & 15 & 11 & 13 & 10 & 3 & 2 & 9 & 13 & 5 & 11 & 3 & 11 & 10 & 1 & 9 & 2 & 11 & 12 & 9 & 6 & 9 & 13 & 14 & 0 & 14 & 7 & 0 & 12 & 3 & 13 & 7 & 5 & 11 & 3 & 10 & 2 & 12 & 7 & 11 & 12 & 10 & 8 & 4 \\ 
 B_{e_{ 42 }} &   4 & 0 & 2 & 2 & 0 & 11 & 8 & 8 & 12 & 15 & 13 & 0 & 11 & 8 & 0 & 2 & 3 & 10 & 0 & 11 & 5 & 12 & 4 & 4 & 2 & 1 & 9 & 10 & 14 & 3 & 7 & 1 & 6 & 11 & 11 & 14 & 1 & 6 & 9 & 2 & 14 & 0 & 13 & 5 & 8 & 5 & 0 & 6 & 0 & 14 & 7 & 5 & 0 & 9 & 13 & 3 & 6 & 4 & 13 & 10 \\ 
 B_{e_{ 43 }} &   8 & 8 & 0 & 12 & 6 & 12 & 11 & 13 & 15 & 12 & 10 & 9 & 12 & 0 & 5 & 1 & 2 & 12 & 2 & 3 & 12 & 11 & 0 & 8 & 10 & 4 & 5 & 15 & 0 & 4 & 10 & 6 & 11 & 13 & 0 & 3 & 13 & 4 & 2 & 7 & 7 & 13 & 0 & 12 & 0 & 0 & 4 & 11 & 8 & 14 & 13 & 1 & 4 & 5 & 8 & 4 & 3 & 11 & 8 & 15 \\ 
 B_{e_{ 44 }} &   7 & 6 & 9 & 0 & 12 & 11 & 1 & 8 & 1 & 2 & 13 & 9 & 2 & 0 & 4 & 4 & 2 & 2 & 12 & 14 & 7 & 7 & 2 & 1 & 13 & 0 & 0 & 4 & 5 & 2 & 4 & 11 & 8 & 7 & 8 & 1 & 6 & 7 & 13 & 4 & 0 & 5 & 12 & 0 & 11 & 8 & 8 & 8 & 13 & 9 & 0 & 8 & 13 & 9 & 3 & 0 & 4 & 0 & 8 & 8 \\ 
 B_{e_{ 45 }} &   1 & 7 & 14 & 8 & 12 & 9 & 8 & 6 & 15 & 5 & 3 & 7 & 5 & 13 & 12 & 13 & 14 & 0 & 4 & 13 & 6 & 9 & 6 & 1 & 6 & 9 & 7 & 12 & 3 & 15 & 1 & 12 & 6 & 11 & 5 & 1 & 3 & 7 & 0 & 6 & 12 & 8 & 0 & 11 & 0 & 3 & 9 & 9 & 6 & 15 & 7 & 9 & 10 & 0 & 5 & 11 & 15 & 3 & 0 & 15 \\ 
 B_{e_{ 46 }} &   2 & 2 & 13 & 3 & 3 & 8 & 10 & 6 & 15 & 0 & 7 & 5 & 9 & 5 & 0 & 8 & 0 & 1 & 5 & 8 & 0 & 8 & 9 & 9 & 6 & 9 & 13 & 5 & 2 & 13 & 5 & 13 & 12 & 9 & 3 & 8 & 12 & 12 & 10 & 7 & 3 & 5 & 0 & 8 & 3 & 0 & 4 & 13 & 4 & 4 & 12 & 15 & 0 & 9 & 5 & 10 & 10 & 1 & 3 & 10 \\ 
 B_{e_{ 47 }} &   8 & 2 & 8 & 13 & 6 & 2 & 7 & 9 & 9 & 12 & 8 & 0 & 0 & 14 & 1 & 8 & 12 & 7 & 6 & 13 & 11 & 11 & 7 & 9 & 1 & 2 & 13 & 3 & 8 & 1 & 4 & 0 & 1 & 13 & 0 & 4 & 2 & 0 & 5 & 0 & 13 & 0 & 4 & 8 & 9 & 4 & 0 & 8 & 2 & 5 & 11 & 12 & 0 & 2 & 8 & 7 & 4 & 4 & 8 & 7 \\ 
 B_{e_{ 48 }} &   0 & 8 & 0 & 1 & 8 & 4 & 15 & 2 & 0 & 10 & 10 & 1 & 11 & 14 & 5 & 8 & 8 & 9 & 9 & 15 & 14 & 1 & 1 & 15 & 5 & 8 & 4 & 2 & 14 & 7 & 3 & 5 & 0 & 7 & 4 & 7 & 7 & 9 & 10 & 6 & 7 & 6 & 11 & 8 & 9 & 13 & 8 & 0 & 10 & 3 & 15 & 15 & 0 & 0 & 10 & 7 & 5 & 6 & 7 & 0 \\ 
 B_{e_{ 49 }} &   11 & 8 & 12 & 7 & 15 & 3 & 14 & 7 & 0 & 9 & 1 & 8 & 7 & 8 & 0 & 12 & 10 & 8 & 15 & 3 & 12 & 8 & 8 & 4 & 6 & 10 & 2 & 3 & 8 & 8 & 2 & 14 & 2 & 2 & 12 & 12 & 8 & 15 & 11 & 10 & 5 & 0 & 8 & 13 & 6 & 4 & 2 & 10 & 0 & 15 & 5 & 12 & 8 & 14 & 9 & 6 & 0 & 1 & 0 & 1 \\ 
 B_{e_{ 50 }} &   14 & 5 & 7 & 8 & 14 & 0 & 11 & 6 & 9 & 9 & 8 & 6 & 7 & 5 & 4 & 13 & 6 & 15 & 1 & 7 & 12 & 15 & 9 & 0 & 10 & 10 & 8 & 6 & 6 & 11 & 4 & 0 & 5 & 4 & 11 & 6 & 10 & 14 & 14 & 14 & 11 & 14 & 14 & 9 & 15 & 4 & 5 & 3 & 15 & 0 & 3 & 0 & 5 & 6 & 2 & 5 & 0 & 7 & 8 & 2 \\ 
 B_{e_{ 51 }} &   9 & 14 & 6 & 6 & 7 & 0 & 11 & 15 & 14 & 4 & 5 & 7 & 8 & 13 & 1 & 9 & 12 & 6 & 3 & 10 & 9 & 12 & 7 & 0 & 8 & 6 & 0 & 5 & 6 & 1 & 6 & 11 & 15 & 6 & 12 & 15 & 5 & 9 & 8 & 3 & 3 & 7 & 13 & 0 & 7 & 12 & 11 & 15 & 5 & 3 & 0 & 0 & 13 & 15 & 3 & 9 & 12 & 1 & 9 & 1 \\ 
 B_{e_{ 52 }} &   11 & 2 & 11 & 8 & 8 & 12 & 13 & 14 & 0 & 2 & 0 & 13 & 12 & 3 & 3 & 1 & 6 & 12 & 13 & 4 & 12 & 6 & 4 & 5 & 11 & 0 & 2 & 8 & 13 & 8 & 7 & 4 & 15 & 10 & 7 & 3 & 12 & 4 & 13 & 11 & 10 & 5 & 1 & 8 & 9 & 15 & 12 & 15 & 12 & 0 & 0 & 0 & 0 & 4 & 0 & 11 & 5 & 10 & 4 & 8 \\ 
 B_{e_{ 53 }} &   14 & 1 & 5 & 8 & 13 & 15 & 14 & 5 & 15 & 15 & 13 & 13 & 3 & 4 & 11 & 3 & 11 & 8 & 8 & 4 & 0 & 0 & 12 & 1 & 5 & 10 & 11 & 8 & 7 & 3 & 13 & 5 & 7 & 13 & 0 & 5 & 7 & 4 & 8 & 6 & 2 & 0 & 4 & 13 & 10 & 0 & 0 & 0 & 8 & 5 & 13 & 0 & 0 & 5 & 5 & 14 & 2 & 11 & 14 & 15 \\ 
 B_{e_{ 54 }} &   7 & 6 & 9 & 8 & 8 & 1 & 6 & 11 & 11 & 8 & 9 & 1 & 4 & 1 & 2 & 8 & 6 & 9 & 7 & 3 & 14 & 0 & 13 & 12 & 14 & 11 & 4 & 9 & 0 & 12 & 7 & 9 & 6 & 0 & 10 & 14 & 2 & 4 & 3 & 9 & 12 & 9 & 5 & 9 & 0 & 9 & 2 & 0 & 14 & 6 & 15 & 4 & 5 & 0 & 13 & 8 & 15 & 12 & 1 & 15 \\ 
 B_{e_{ 55 }} &   13 & 0 & 12 & 4 & 2 & 1 & 9 & 4 & 13 & 5 & 3 & 12 & 3 & 8 & 8 & 15 & 3 & 0 & 12 & 0 & 8 & 13 & 12 & 9 & 10 & 9 & 10 & 5 & 6 & 9 & 7 & 10 & 15 & 6 & 3 & 10 & 0 & 9 & 5 & 1 & 7 & 13 & 8 & 3 & 5 & 5 & 8 & 10 & 9 & 2 & 3 & 0 & 5 & 13 & 0 & 8 & 0 & 5 & 4 & 2 \\ 
 B_{e_{ 56 }} &   1 & 9 & 13 & 8 & 15 & 4 & 2 & 15 & 1 & 5 & 9 & 12 & 14 & 5 & 8 & 6 & 6 & 5 & 1 & 0 & 5 & 1 & 1 & 0 & 3 & 10 & 1 & 13 & 0 & 9 & 1 & 1 & 14 & 10 & 3 & 0 & 6 & 13 & 12 & 0 & 11 & 3 & 4 & 0 & 11 & 10 & 7 & 7 & 6 & 5 & 9 & 11 & 14 & 8 & 8 & 0 & 3 & 2 & 11 & 5 \\ 
 B_{e_{ 57 }} &   5 & 12 & 15 & 0 & 11 & 11 & 13 & 4 & 15 & 10 & 9 & 1 & 12 & 5 & 9 & 3 & 4 & 1 & 1 & 11 & 0 & 0 & 12 & 2 & 10 & 3 & 14 & 8 & 6 & 14 & 2 & 8 & 9 & 9 & 13 & 9 & 9 & 0 & 0 & 10 & 12 & 6 & 3 & 4 & 15 & 10 & 4 & 5 & 0 & 0 & 12 & 5 & 2 & 15 & 0 & 3 & 0 & 1 & 1 & 7 \\ 
 B_{e_{ 58 }} &   3 & 11 & 10 & 2 & 15 & 12 & 1 & 4 & 3 & 12 & 14 & 6 & 10 & 6 & 10 & 7 & 9 & 5 & 8 & 13 & 11 & 0 & 15 & 7 & 10 & 1 & 2 & 7 & 14 & 9 & 4 & 1 & 6 & 13 & 1 & 2 & 8 & 1 & 7 & 4 & 10 & 4 & 11 & 0 & 3 & 1 & 4 & 6 & 1 & 7 & 1 & 10 & 11 & 12 & 5 & 2 & 1 & 0 & 4 & 6 \\ 
 B_{e_{ 59 }} &   8 & 12 & 1 & 2 & 2 & 7 & 13 & 5 & 8 & 6 & 0 & 11 & 13 & 13 & 4 & 12 & 6 & 11 & 2 & 0 & 2 & 0 & 0 & 2 & 7 & 7 & 5 & 8 & 9 & 9 & 0 & 7 & 9 & 1 & 13 & 4 & 12 & 2 & 0 & 4 & 8 & 13 & 8 & 8 & 0 & 3 & 8 & 7 & 0 & 8 & 9 & 4 & 14 & 1 & 4 & 11 & 1 & 4 & 0 & 8 \\ 
 B_{e_{ 60 }} &   0 & 8 & 0 & 11 & 8 & 14 & 5 & 14 & 0 & 7 & 7 & 9 & 10 & 0 & 5 & 11 & 8 & 4 & 10 & 14 & 9 & 7 & 8 & 7 & 7 & 9 & 6 & 13 & 3 & 0 & 6 & 1 & 0 & 15 & 10 & 5 & 9 & 1 & 4 & 3 & 4 & 10 & 15 & 8 & 15 & 10 & 7 & 0 & 1 & 2 & 1 & 8 & 15 & 15 & 2 & 5 & 7 & 6 & 8 & 0 
\end{array} $}
}
};
\end{tikzpicture}
\caption{$\mathfrak{B}_\circ$ of an operation $\circ$ which linearizes the map conjugated to PRESENT's mixing layer}\label{rendomopPresent}
\end{table}
\newpage
\section{Conclusions}
Continuing the study of \cite{calderini2017elementary}, here we focused on the class of hidden sums such that $T_\circ\subseteq\AGL(V,+)$ and $T_+\subseteq\AGL(V,\circ)$, which we called practical hidden sums, as these could be used to exploit new attacks on block ciphers.

We gave a lower bound on the number of the practical hidden sums. Then we compared this lower bound with the upper bound given in \cite{calderini2017elementary}.

In the second part, we dealt with the problem of individuating possible practical hidden sums for a given linear map, providing Algorithm \ref{searchop} and showing an example on the case of the PRESENT's mixing layer.

A dual approach is under investigation in \cite{Rob2} ({some of the results are reported in \cite{Rob}}), where the author takes into consideration some S-boxes, which would be strong according to classical cryptanalysis, and identify
some practical hidden sums that weaken significantly the non-linearity properties of the S-Boxes. Once the candidate hidden sums have been found, some (linear) mixing layers are constructed such that they are linear with respect to a hidden sum and the resulting cipher turns out to be attackable by the { clever} attacker.
\section*{Acknowledgements}

The authors would like to thank the anonymous referees for their stimulating suggestions, and several people for interesting discussions: R. Civino and R. Aragona.\\
Part of the results in this paper are from the first's author Master thesis, supervised by the last author. They have been presented partially at WCC 2017.\\
This research was partially funded by the Italian Ministry of Education, Universities and Research, with the project PRIN 2015TW9LSR  ``Group theory and applications''.



%
%
%
%
%
%
%
%
%
%
%
%
%
%
%
%
%
%
%
%
%
%
%
%

\end{document}